\setlist{nosep}
\newcommand{\ov}{\overline}
\newcommand{\eps}{\epsilon}
\newcommand{\tx}{\textstyle}
\newcommand{\dis}{\displaystyle}
\newcommand{\pr}{\prime}
\newcommand{\al}{\alpha}
\newcommand{\rw}{\rightarrow}
\newcommand{\R}{\mathbb R}
\newcommand{\Tcs}{T_{cS} }
\newcommand{\Tcn}{T_{cN} }
\newcommand{\Tcnp}{T^+_{cN} }
\newcommand{\Tcnm}{T^-_{cN} }
\newcommand{\Tcsm}{T^-_{cs} }
\newcommand{\dotp}{\cdot}
\title{Synchronous Glacial Cycles in a Nonsmooth Conceptual Climate Model with Asymmetric Hemispheres}
\author{Alice Nadeau\thanks{a.nadeau@cornell.edu; Department of Mathematics, Cornell University} \and James Walsh\thanks{Department of Mathematics, Oberlin College} \and  Esther Widiasih\thanks{Math, Natural, and Health Sciences (MNHS) Division, University of Hawaii West-Oahu} }
\begin{document}

\maketitle

\begin{abstract}
We present a new conceptual model of the Earth's glacial-interglacial cycles, one leading to  governing equations for which the vector field has a hyperplane of discontinuities. This work extends the classic Budyko- and Sellers-type conceptual energy balance models of temperature-albedo feedback by removing the standard assumption of planetary symmetry about the equator. The dynamics of separate Northern and Southern Hemisphere ice caps are coupled to an 
equation representing the annual global mean surface temperature. The system has a discontinuous switching mechanism  based on mass balance principles for the Northern Hemisphere ice sheet. We show the associated Filippov system admits a unique nonsmooth and attracting limit cycle that represents the cycling between glacial and interglacial states. Due to the vastly different time scales involved, the model presents a nonsmooth geometric perturbation problem, for which we use
 ad hoc mathematical techniques to produce the periodic orbit. We find climatic changes in the Northern Hemisphere drive synchronous changes in the Southern Hemisphere, as is observed for the Earth on orbital time scales. 
\end{abstract}

\begin{keywords}
  nonsmooth dynamical systems, virtual equilibria, nonsmooth return map, ice--albedo feedback, paleoclimate,  glacial cycles
\end{keywords}

\begin{AMS}
  49J52, 37N99, 86A40
\end{AMS}

\section{Introduction}

Systems of nonsmooth differential equations have been used to model a wide range of physical, biological, and mechanical phenomena (see for example the references in \cite{dibernardo}).  In some cases, the nonsmoothness in the models comes from assuming a limiting behavior of an abrupt transition (e.g. in \cite{welander,julie}) while in others the modeled behavior is truly discontinuous or nonsmooth, for example due to friction or impacts in mechanical systems.
Because of the possibility of these types of phenomena in a host of different aspects of the climate system, the mathematical approach to the design and analysis of conceptual climate models increasingly uses tools from the developing field of nonsmooth dynamical systems. Frequently, climate models of this type contain a  switching mechanism  that causes the system to flip to a different climate state. Here we contribute to this body of literature by developing and analyzing a nonsmooth ODE model of Northern and Southern Hemisphere glacial cycles where a hyperplane in state space delineates a switch between the climate state of advancing Northern Hemisphere glaciers and the climate state of retreating Northern Hemisphere glaciers.  For appropriate choices of the system parameters, we show the existence of an attracting periodic orbit corresponding to synchronous Northern and Southern Hemisphere glacial cycles driven by mass balance of the Northern Hemisphere glaciers.

In this study, we consider piecewise smooth systems of the form
\begin{align} 
    \dot{\mathbf{v}}\in\mathbf{X}(\mathbf{v})=\begin{cases}
    {\bf X}_-({\bf v}), &  {\bf v}\in S_-\\ 
    \{ (1-p){\bf X}_-({\bf v})+p{\bf X}_+({\bf v}) : p\in [0,1]\}, & {\bf v}\in\Sigma \\ 
    {\bf X}_+({\bf v}), &  {\bf v}\in S_+ 
    \end{cases}
    \label{eq-intro}
\end{align}
where $\mathbf{v}\in\mathbb R^n$, $\Sigma$ denotes the \emph{switching manifold},  and $X_{\pm}$ is smooth on $S_{\pm}\subset\mathbb R^n$.  Systems of this form are  \emph{differential inclusions}.  While in $S_-$, solutions are unique with flow $\phi_-({\bf v},t)$ corresponding to system  $\dot{\bf v}={\bf X}_-({\bf v})$. Similarly, solutions in 
$S_+$ are unique with flow $\phi_+({\bf v},t)$ given by system $\dot{\bf v}={\bf X}_+({\bf v})$. For ${\bf v}\in\Sigma, \ \dot{\bf v}$ must lie in the closed convex hull of the two vectors ${\bf X}_-({\bf v})$ and ${\bf X}_+({\bf v})$.  A solution to \eqref{eq-intro} {\em in the sense of Filippov} is an absolutely continuous function ${\bf v}(t)$ satisfying $\dot{\bf v}\in {\bf X}({\bf v})$ for almost all $t$ \cite{fil}.

In piecewise systems of the form \eqref{eq-intro}, periodic orbits may be present even when the vector fields $\mathbf{X}_\pm$ don't themselves have periodic orbits (as is the case here).  Heuristically, this happens when the flow in $S_+$ dictates that the solution should cross into $S_-$ and that the flow in $S_-$ dictates that the solution should cross into $S_+$.  More concretely, consider the simplest case: planar systems formed by two continuous differential systems separated by a straight line (as is the situation in, for example, \cite{julie,freire,llibre2017}).  In the simplest cases, periodic orbits can be found by examining the system for different arrangements of equilibria.  In nonsmooth systems there are three types of equilibria to consider, namely\\
\begin{def2}[\cite{dibernardo}]  Let ${\bf v}$ be a solution in the sense of Filippov to \eqref{eq-intro}. 
\begin{enumerate}
\item[(i)] ${\bf v}$ is a {\em regular equilibrium point} of \eqref{eq-intro} if either ${\bf X}_+({\bf v})={\bf 0}$ and ${\bf v}\in S_+$, or if ${\bf X}_-({\bf v})={\bf 0}$ and ${\bf v}\in S_-$.
\item[(ii)]  ${\bf v}$ is a {\em virtual equilibrium point} of \eqref{eq-intro} if either ${\bf X}_+({\bf v})={\bf 0}$ and ${\bf v}\in S_-$, or if ${\bf X}_-({\bf v})={\bf 0}$ and ${\bf v}\in S_+$.
\item[(iii)]  ${\bf v}$ is a {\em boundary equilibrium point} of \eqref{eq-intro} if  ${\bf X}_+({\bf v})={\bf X}_-({\bf v})={\bf 0}$ and ${\bf v}\in \Sigma$.\\
\end{enumerate}
\end{def2}
\noindent Periodic orbits can be present when $\mathbf{X}_{\pm}$  both have regular equilibria (e.g. \cite{freire}), both have virtual equilibria (e.g. \cite{julie,julie-dis} and here), or both have no equilibria of any type (e.g. \cite{llibre2017}). More complicated behavior can also produce nonsmooth periodic orbits, such as when a periodic orbit of either $\mathbf{X}_-$ or $\mathbf{X}_+$ intersects the switching manifold (e.g. Section 2.4 of \cite{dibernardo}).

There are many techniques that can be employed to establish the existence of a nonsmooth periodic orbit in a system of the form of \eqref{eq-intro}.  
The technique of \emph{regularization} is a standard technique which converts the nonsmooth system to a smooth one and can thereby be studied using standard dynamical systems techniques (e.g. \cite{sotomayor1996,dieci,awrejcewicz}).  However, in studying applications other techniques are frequently used because, for example, the smoothing function in the regularization method does not have an explicit form, among other issues limiting the technique's usefulness in specific settings (see the discussion in Section 5.2  in \cite{julie-dis}).  Recent work in showing the existence of periodic orbits in conceptual climate models with switching mechanisms have employed coordinate changes to investigate behavior near the discontinuity boundary \cite{julie-dis,julie} (similar to blow-up techniques in celestial mechanics to investigate behavior near a collision), Filippov's existence and uniqueness results for differential inclusions \cite{Barry2017},  concatenation of smooth solutions from the associated subregions of the phase space where the vector field is smooth \cite{Barry2017}, and construction of a return map on the discontinuity boundary \cite{wwhm,budd}.  

Here we employ the technique used in \cite{wwhm} but in one higher dimension. In particular, we construct a return map on the discontinuity boundary and show that the map is contracting for appropriately chosen parameter values.  This allows us to conclude that there is a unique attracting periodic orbit in the system for appropriate choices of parameter values.  We then demonstrate that for physically relevant choices of parameter values the periodic orbit exists.  This study is an extension of \cite{wwhm} because we have relaxed a symmetry assumption of the climate system about the equator, adding an additional dimension to state space. That is, our model couples separate Northern Hemisphere and Southern Hemisphere ice cap dynamics via the influence that the (possibly asymmetric) positioning of the ice caps has on the global mean surface temperature, and vice versa.  This symmetry assumption is standard in the family of models that we consider (e.g. \cite{sellers,budyko,Widiasih2013,McGehee2014}) but has recently been removed to study the climate of Pluto \cite{Nadeau2019}.  A more general mathematical study of the model with the symmetry assumption removed (but without the mass balance switch that we study here) is forthcoming.

The rest of the paper is laid out as follows.  In the following section we motivate the scientific aspect of this study and describe the physical observations that the model behavior reflects.  
The derivation of the model equations, where separate equations modeling dynamic Northern and Southern Hemisphere ice sheets, via consideration of distinct albedo lines $\eta_N$ and $\eta_S$, and a proxy of the global annual mean surface temperature, $w$, is presented in Section \ref{section-governing-eq}.  Leaving consideration of the dynamics of the $(w,\eta_S,\eta_N)$-system on the boundary of state space for future work, we discuss the behavior of this system  off of  the boundary in Section \ref{Section-Symmetry}. In Section \ref{Section-Mass-Balance} the Northern Hemisphere flip-flop glacial cycle model from \cite{wwhm}  is placed in the Northern Hemisphere of our asymmetric model. We prove the existence of a unique attracting periodic orbit representing the glacial-interglacial cycles, with the mathematical techniques used reminiscent of (smooth) geometric singular perturbation theory.  Notably, the flip-flop behavior of the ice sheet in the Northern Hemisphere drives synchronous oscillations of the ice cap in the Southern Hemisphere via the coupling of the two albedo lines with the surface temperature.  This result aligns with the theory that the Southern Hemisphere ice sheet oscillations are in response to climate changes in the Northern Hemisphere on orbital time scales.

\section{Scientific Background}

Understanding the behavior of the glaciers over time and the resulting impact on Earth's climate has been a major endeavour across disparate fields of the physical and biological sciences for over a hundred years. Glacial cycles are characterized by the advance of large ice sheets from the poles to the mid-latitudes and their subsequent retreat and are a defining characteristic of Earth's climate history. Glacier advance, occurring over tens of thousands of years, is not monotonic and glacial records show periods of relative warming as the climate gradually cools to the glacial maximum \cite{broecker, brook, pedro}. Relative to the long time-scale of their advance, glacier retreat is fast, taking only thousands of years instead of tens of thousands (e.g. see \cite{tzip2003}).  This advance and retreat cycle creates a characteristic sawtooth pattern in the glacial record for roughly the past 800,000 years (e.g. see for example \cite{brook}).  

Many questions concerning the Earth's glacial cycles remain unanswered, including those related to changes in the period and amplitude  of the glacial-interglacial cycles that have occurred over geologic time.  More relevant to our model is evidence that on orbital times scales (100 kyr) ice cover oscillations in the Northern  and Southern Hemisphere have been in sync \cite{broecker,brook,lowell,raylisnic,rother}. While the physical mechanisms behind these different behaviors continue to be investigated, some posit  that changes in the Northern Hemisphere climate drive changes in the Southern Hemisphere on orbital time scales \cite{broecker,brook,lowell,raylisnic}. The climate changes in the Northern Hemisphere are in turn thought to be brought about by changes in high northern latitude incoming solar radiation, due to changes in Earth's orbital parameters  over long time scales (the latter known as Milankovitch cycles \cite{milank}).  Studies have demonstrated the prominent role that Earth's obliquity (tilt of the axis of rotation relative to the orbital plane) plays in pacing the glacial cycles \cite{huybers2005}, but the jury is still out on whether precession plays a definitive role (e.g. \cite{huybers2011} and references therein). Crucially,  precession acts with the opposite effect in the hemispheres (e.g. \cite{brook,huybers2011}).

Further, it is not known to what extent glacial-interglacial cycles are precipitated by orbital forcings in conjunction with internal climate feedbacks \cite{brook}.  Such feedbacks include greenhouse gas forcing, albedo (surface reflectivity) feedbacks, dust forcing, deep ocean temperature, isostatic rebound, or mass balance of Northern Hemisphere glaciers \cite{abe-ouchi,brook}.  Here we consider two of these feedback mechanisms, albedo and mass balance, on Earth's surface temperature.  Recent work suggests that these two mechanisms are not necessarily decoupled and  several studies have noted and investigated how changes in a glacier's albedo may influence local temperature or precipitation feedbacks and thus affect a glacier's growth (e.g. \cite{abe-ouchi,raylisnic,tzip2003}).  For instance,   Tziperman and Gildor  note that extensive, high albedo sea ice cools the atmospheric temperature and can divert snow storms away from continental ice sheets (\cite{tzip2003} and references therein).

The model that we consider in this study is a \emph{conceptual climate model} (sometimes \emph{simple climate model}, \emph{low complexity climate model}, \emph{analytical climate model} or \emph{reduced climate model}). Conceptual models are used to give a broad view of the ways in which major climate components interact, contrasting with higher complexity models (such as general circulation models or earth systems models with two or three spatial dimensions) which simulate atmospheric, oceanic, chemical, and biospheric dynamics on a grid of the Earth.  While in the past highly complex climate models have not been applied to study the long-term behavior of the past climate system (due to limited computing power and the length of time series needed to simulate, for example), recently intermediate to high complexity models have been been adapted to successfully study glacial dynamics (e.g. \cite{abe-ouchi,kawamura, choudhury}).  Conceptual climate models still have an important role to play  in advancing scientific understanding of glacial cycles and glacier dynamics (e.g. \cite{budyko,sellers,saltzman,huybers2005,huybers2011,engler2018dynamical}) and are also a more computationally efficient way to test theories about interactions between different climate elements before implementing the idea in a more complex model (e.g. \cite{knutti}). In the case of glacial cycle models the climate elements considered might include  surface temperature, energy into and out of the climate system, the latitudinal transport of energy, the carbon cycle, and the ways in which processes such as surface  albedo affect these interactions.

Conceptual modeling of the glacial cycles using energy balance equations was popularized by the work of M. Budyko \cite{budyko} and W. Sellers \cite{sellers} in 1969, with the introduction of equations used to model surface temperature on a planet with an assumed symmetry about the equator.  The temperature model that we use here is a descendent of Budyko's original equation \cite{budyko}.  Following the through line of the Budyko family of models leading to the model we study here, E. Widiasih coupled Budyko's temperature equation with a dynamic ice sheet in \cite{Widiasih2013}, proving the existence of a small stable ice cap for the  resulting infinite-dimensional system.   An approximation of Widiasih's temperature-albedo line system  was introduced in \cite{McGehee2014}, a simplification using smooth invariant manifold theory that resulted  in a planar system of ODEs  exhibiting the same qualitative behavior as Widiasih's system.  
The approximating  temperature-albedo line system in \cite{McGehee2014} then served as the basis for the nonsmooth ``flip-flop" glacial cycle presented in \cite{wwhm}, in which a nonsmooth attracting periodic orbit was shown to exist. This periodic orbit represented the Earth's climate system cycling between glacial and interglacial states, with the switching mechanism provided by a conceptual ice sheet mass balance principle. In this study, we extend the \cite{wwhm} model by removing a symmetry assumption about the climate system.

This removal of the symmetry assumption is justified when considering the inherent asymmetry between the northern and southern polar regions. The most notable difference is the fact that over the past 800,000 years, Antarctica has been completely glaciated, and ``glacial" advance and retreat refers to major changes in Southern Ocean sea ice extent \cite{raylisnic,gersonde,fraser} and glaciers in mountainous areas of southern South America, Africa, and Oceania \cite{rother,darvill} rather than the large glaciers terminating on land in the Northern Hemisphere \cite{abe-ouchi}. Thus, while glaciers in the Northern Hemisphere terminated on land, Southern Hemisphere glaciers terminated in the Southern Ocean with large ice shelves and sea ice extent reaching perhaps as far as 45$^\circ$S at times in some places \cite{gersonde,fraser}. For this reason, we do not place a mass balance equation in the Southern Hemisphere and allow the Southern Hemisphere albedo line to indicate Southern Hemisphere glacial dynamics in our model. Other potential differences in the glacial records are smaller amplitude oscillations for ice volume in the Southern Hemisphere, relative to the Northern Hemisphere \cite{raylisnic}; however, some records indicate oscillations of similar amplitude \cite{blunier}.

The main question that our model addresses is: do Northern Hemisphere glacial cycles affect the Southern Hemisphere and, if so, can they drive synchronous cycles in both hemispheres?  Here we explicitly consider the role of global temperature/albedo feeback and Northern Hemisphere albedo/mass balance feedback.  We leave the impact of Earth's changing orbital parameters to a later study.

\section{Governing Equations with Two Albedo Lines} \label{section-governing-eq}

\subsection{Temperature Equation}

The energy balance equations introduced by M. Budyko \cite{budyko} and W. Sellers \cite{sellers} in 1969  describe the evolution of the Earth's latitudinally averaged annual mean  surface temperature $T(y,t)$, where $t$ denotes time in years and $y$ denotes the sine of the latitude.  In the model we use here, the temperature evolves based on   M. Budyko's   energy balance equation \cite{budyko}
\begin{equation}
\begin{aligned} 
 R \frac{\partial T}{\partial t} = Q s(y) (1-\alpha (y,\mathbf\eta) ) - \left(A + B T(y,t) \right) - C \left(T(y,t) - \overline{T}(t) \right),
 \end{aligned}
  \label{EQ-Budyko}
  \end{equation}
where the change in temperature is determined by the absorbed solar radiation, $Q s(y,\beta) (1-\alpha (y,\mathbf\eta))$; the emitted longwave radiation, $A + B T(y,t)$; and  energy transport across latitudes, $C \left(T(y,t) - \overline{T}(t) \right)$ where $\overline{T}(t)$ is the global average temperature.  We note W. Sellers independently introduced a similar energy balance model in the same year Budyko's appeared, albeit one with a different meridional energy transport mechanism \cite{sellers}.

The physical meaning of the different terms and parameters of \eqref{EQ-Budyko} have been explained extensively in the literature (see for example \cite{Tung2007,Kaper2013,Widiasih2013,Nadeau-dis}), so we omit a detailed explanation here.  Instead we provide Table \ref{TAB-parameters-Earth} with brief physical descriptions of the parameters and note the major changes due to our removal of the symmetry assumption used in previous studies in the remainder of this section.

Because we consider the possibility of asymmetry between the hemispheres, we let sine of the latitude $y$ range from the south pole $y=-1$ to the north pole $y=1$ rather than from the equator to the north pole ($y\in[0,1]$). The surface albedo is given by $\alpha(y,\eta)$, which depends on $y$ and the location of surface ice, the lower-latitude boundary of which is typically denoted $\eta$.  In this work, however, we take $\eta=(\eta_S,\eta_N)$, which gives the location of a southern ($\eta_S$) and northern ($\eta_N$) latitude where the albedo changes.  We restrict these variables to the interval $[-1,1]$ with the condition $-1\leq\eta_S\leq\eta_N\leq1$ (i.e., we do not let the ice lines cross each other).  We consider a piecewise constant albedo function given by
\begin{equation}\label{alb}
\al(y,\eta_S,\eta_N)=
 \begin{cases}
\al_2, &\text{if  \ } -1 <y<\eta_S \\
\al_1, &\text{if  \ } \eta_S <y<\eta_N  \\
\al_2, &\text{if  \ }  \eta_N<y<1,\\
\end{cases} \quad \quad
\end{equation}
with appropriate averages at the ice lines. 
We take $\alpha_1<\alpha_2$ so that the regions poleward of the ice lines are more reflective.  

The energy transport term is a simple linear relaxation to the mean annual global temperature given by integrating the temperature over all latitudes (the interval $[-1,1]$) $\overline T(t) = \frac{1}{2}\int_{-1}^1T(y,t)dy$.  Finally, for clarity, note that the distribution of the annual insolation across $y$, which also depends on the tilt of the Earth's spin axis (or {\em obliquity})   $\beta$, can be approximated to any degree of accuracy by
\begin{equation}\label{s(y)}
s(y)=\sum^M_{m=0} a_{2m}p_{2m}(\cos\beta)p_{2m}(y),
\end{equation}
 where $p_{2m}$ is the $2m$th Legendre polynomial and the $a_{2m}$ can be explicitly determined following \cite{Nadeau2017}.  In this study we fix the obliquity at the Earth's current value $\beta=23.5^\circ$, for which  $s_{2m}=a_{2m}p_{2m}(\cos(\pi 23.5/180)),$ and we write $s(y)$ in lieu of $s(y,\beta)$. 

In a computation similar to that presented for Budyko's equation in \cite{Tung2007}, one finds that at equilibrium the temperature distribution is 
\begin{equation}\label{Tstar}
T^*(y)=\frac{1}{B+C}\left( Qs(y)(1-\al(y))-A+C\overline{T^*}\right),
\end{equation}
with the global mean temperature  given by
\begin{equation}\label{Tstarbar}
\overline{T^*}=\frac{1}{B}\left(Q(1-\al_2)-A+\frac{1}{2}Q(\al_2-\al_1) \int^{\eta_N}_{\eta_S}s(y) dy\right).
\end{equation}
 We note that, due to the use of expansion \eqref{s(y)}, the equilibrium function $T^*(y)$ is a (``piecewise even") polynomial of degree $2M$ in $y$ and degree $2M+1$ in each of $\eta_N$ and $\eta_S$.
 
 \subsection{Albedo Line Equations}

Here we consider two dynamic ice line equations in the fashion of Widiasih's single ice line equation \cite{Widiasih2013}. In particular, the movement of an ice line is determined by the temperature at the ice line relative to a critical temperature $T_c$, the highest temperature at which ice is present year round.  We have
 \begin{equation}
 \begin{aligned}
 \frac{d\eta_S}{dt}&=\rho(T_c-T(\eta_S,t)),\\
  \frac{d\eta_N}{dt}&=\rho(T(\eta_N,t)-T_c).
 \end{aligned}
 \label{EQ-ice-line-cap}
 \end{equation}
These equations dictate that if the temperature at the albedo line is greater than the critical temperature, the albedo line moves toward its own pole.  If the temperature is less than the critical temperature, the albedo line moves toward the opposite pole.   
The positive parameter $\rho$ controls how fast the ice line changes relative to changes in temperature. 
In their discussion of glacial cycles on Earth, McGehee and Widiasih give an in-depth discussion on the behavior of solutions of a similar, hemispherically symmetric energy balance model relative to the choice of $\rho$ \cite{McGehee2014}.

\begin{table}
\caption{Parameter values used in this study (unless otherwise noted).}
\begin{center}
    \begin{tabular}{ || c | p{6cm} | r | r  ||}
    \hline
    Parameter & Brief Description  & Value & Units  \\ \hline     \hline
    $R$     & Surface layer heat capacity	& 1  & Wm$^{-2}$K$^{-1}$ 	\\ \hline
    $Q$       & Annual average insolation	& 343	& Wm$^{-2}$ \\ \hline
    $\beta$    & Obliquity& 23.5  & degrees \\ \hline
    $\alpha_1$  & Albedo between the albedo line latitudes $\eta_S(t)$ and $\eta_N(t)$	& 0.32   & dimensionless 	\\ \hline
    $\alpha_2$  & Albedo poleward of the albedo line latitudes $\eta_S(t)$ and $\eta_N(t)$	& 0.62  & dimensionless	 \\ \hline
    $A$  & Greenhouse Gas parameter	& 202  & Wm$^{-2}$	 \\ \hline
    $B$  & Outgoing radiation& 1.9  & Wm$^{-2}$K$^{-1}$	 \\ \hline
    $C$  & Efficiency of heat transport& 3.04  &  Wm$^{-2}$K$^{-1}$	 \\ \hline
     $T_c; \ T_{cS/N}, \ T_{cN}^{\pm}$  & Critical temperature determining advance/retreat of albedo lines & -10; varies  & $^\circ$C	 \\ \hline
    $\rho$  & Albedo line response to temperature change	& 0.3  & K$^{-1}$yr$^{-1}$	 \\ \hline
    $2M$  & Degree of the polynomial approximation of the insolation function	& 2  & dimensionless	 \\ \hline
    $a$  & Accumulation rate	& 1.05  & dimensionless \\ \hline
    $b$  & Critical ablation rate	& 1.75  &  dimensionless\\ \hline
    $b_-$  & Glacial ablation rate	& 1.5  &  dimensionless\\ \hline
    $b_+$  & Interglacial ablation rate	&  5 & dimensionless \\ \hline
    $\epsilon$  & Mass balance response to albedo change	& 0.03  & yr$^{-1}$	 \\ \hline
    \end{tabular}
\end{center}
\label{TAB-parameters-Earth}
\end{table}

\subsection{Finite-dimensional approximation of the temperature equation}

Recall the equilibrium temperature distribution \eqref{Tstar} is a piecewise even function of $y$. In addition, we are assuming the expansion of $s(y)$ in even Legendre polynomials \eqref{s(y)}. We are thus motivated to express the temperature function piecewise  as follows:
\begin{align}\label{Texp}
T(y,t)=
\begin{cases}
U(t,y)=\sum^M_{m=0} u_{2m}(t)p_{2m}(y), & -1\leq y <\eta_S\\
V(t,y)=\sum^M_{m=0} v_{2m}(t)p_{2m}(y), & \eta_S< y <\eta_N\\
W(t,y)=\sum^M_{m=0} w_{2m}(t)p_{2m}(y), & \eta_N<y <1.\\
\end{cases}
\end{align}
The use of expression \eqref{Texp} is similar in spirit to that used in \cite{Walsh2015} to model extensive glacial episodes in the Neoproterozoic Era, work in turn motivated by  the approach to Budyko's equation taken in \cite{McGehee2014}. 
The temperature at each ice line is taken to be the appropriate average, namely,
\begin{align}\label{Teta1}
T(\eta_S)&=\tx{\frac{1}{2}}\sum^M_{m=0} (u_{2m}+v_{2m})p_{2m}(\eta_S),\\\notag
T(\eta_N)&=\tx{\frac{1}{2}}\sum^M_{m=0} (v_{2m}+w_{2m})p_{2m}(\eta_N).\notag
\end{align}
Separately substituting each expression in \eqref{Texp} along with expansion \eqref{s(y)} into equation \eqref{EQ-Budyko}, and equating the  respective coefficients of $p_{2m}$, one arrives at the system of $3(M+1)$ ODEs
\begin{align}\label{full}
R\dot{u}_0&= Qs_0(1-\al_2)-A-(B+C)u_0+C\ov{T}  \\  \notag
R\dot{u}_{2m}&  = Qs_{2m}(1-\al_2) -(B+C)u_{2m}, \qquad\qquad  m\geq 1  \\  \notag
R\dot{v}_0&= Qs_0(1-\al_1)-A-(B+C)v_0+C\ov{T}  \\  \notag
R\dot{v}_{2m}&  = Qs_{2m}(1-\al_1) -(B+C)v_{2m}, \qquad\qquad   m\geq 1  \\  \notag
R\dot{w}_0&= Qs_0(1-\al_2)-A-(B+C)w_0+C\ov{T} \\ \notag
R\dot{w}_{2m}&= Qs_{2m}(1-\al_2) -(B+C)w_{2m}, \qquad\qquad   m\geq 1.  \notag
\end{align}
In addition
\begin{align}\label{Tbar1}
2\ov{T}&=\int^{\eta_S}_{-1}U(t,y) dy+\int^{\eta_N}_{\eta_S}V(t,y) dy+\int^{1}_{\eta_N}W(t,y) dy\\\notag
&=\int^{1}_{-1}U(t,y) dy-\int^{\eta_N}_{\eta_S}U(t,y) dy-\int^{1}_{\eta_N}U(t,y) dy+\int^{\eta_N}_{\eta_S}V(t,y) dy+\int^{1}_{\eta_N}W(t,y) dy\\\notag
&=2u_0-\sum^M_{m=0}(u_{2m}-v_{2m})(P_{2m}(\eta_N)-P_{2m}(\eta_S))-\sum^M_{m=0}(u_{2m}-w_{2m})(1-P_{2m}(\eta_N)),
\end{align}
where we set $P_{2m}(y)=\int p_{2m}(y)dy, m\geq 0$ for ease of notation.

Note the decoupling in \eqref{full}; each of the equations tends to equilibrium except for the three equations corresponding to $m=0$. We thus assume that
\begin{equation}\label{uvw*}
u_{2m}=u^*_{2m}=Ls_{2m}(1-\al_2), \ v_{2m}=v^*_{2m}=Ls_{2m}(1-\al_1), \ w_{2m}=w^*_{2m}=u^*_{2m}, \ m\geq 1,
\end{equation}
where we have let  $L=Q/(B+C)$.

With assumption \eqref{uvw*}, equations \eqref{Teta1} become
\begin{align}\label{Teta2}
T(\eta_S)&=\tx{\frac{1}{2}}(u_0+v_0)+\tx{\frac{1}{2}}\sum^M_{m=1} (u^*_{2m}+v^*_{2m})p_{2m}(\eta_S),
\\\notag
T(\eta_N)&=\tx{\frac{1}{2}}(v_0+w_0)+\tx{\frac{1}{2}}\sum^M_{m=1} (v^*_{2m}+u^*_{2m})p_{2m}(\eta_N).\notag
\end{align}
In addition, and after much simplification, \eqref{Tbar1} can be placed in the form
\begin{equation}\label{Tbar2}
2\ov{T}=\eta_S(u_0-v_0)+\eta_N(v_0-w_0)+u_0+w_0+L(\al_2-\al_1)\sum^M_{m=1} s_{2m}(P_{2m}(\eta_N)-P_{2m}(\eta_S)).
\end{equation}

For an additional simplification,  note that if  $x=u_0-w_0$ then
\begin{equation}\notag
R\dot{x}=R\dot{u}_0-R\dot{w}_0=-(B+C)x.
\end{equation}
We have $x(t)=u_0(t)-w_0(t)\rw 0$ as $t\rw\infty$, and hence we assume $u_0=w_0$. Thus in system \eqref{full}, we need only consider the $u_0$- and $v_0$-equations. Also recalling $w^*_{2m}=u^*_{2m}$ for $m\geq 1,$ $u_0=w_0$ additionally implies that 
$U(t,y)$ and $W(t,y)$ are part of the same degree $2M$ polynomial of $y$, albeit with different  domains, again assuming all the appropriate variables are at equilibrium.

We are thus lead to  consider the pair of equations
\begin{align}\label{u0v0}
R\dot{u}_0&= Qs_0(1-\al_2)-A-(B+C)u_0+C\ov{T}  \\  \notag
R\dot{v}_0&= Qs_0(1-\al_1)-A-(B+C)v_0+C\ov{T}.  \notag
\end{align}
For $m\geq 1, \  u^*_{2m}+v^*_{2m}=2Ls_{2m}(1-\al_0)$, where $\al_0=\frac{1}{2}(\al_1+\al_2).$ Equations \eqref{Teta2} then become
\begin{align}\label{Teta3}
T(\eta_S)&=\tx{\frac{1}{2}}(u_0+v_0)+L(1-\al_0)\sum^M_{m=1}s_{2m}p_{2m}(\eta_S)\\\notag
&=\tx{\frac{1}{2}}(u_0+v_0)+L(1-\al_0)(s(\eta_S)-s_0p_0(\eta_S))\\\notag
&=\tx{\frac{1}{2}}(u_0+v_0)+L(1-\al_0)(s(\eta_S)-1), \mbox{ and similarly}\\ \notag
T(\eta_N)&=\tx{\frac{1}{2}}(u_0+v_0)+L(1-\al_0)(s(\eta_N)-1).
\end{align}
Setting $w_0=u_0$ in  \eqref{Tbar2}   yields
\begin{equation}\label{Tbar3}
2\ov{T}=2u_0-(u_0-v_0)(\eta_N-\eta_S)+L(\al_2-\al_1)\left(\int^{\eta_N}_{\eta_S}s(y) dy-(\eta_N-\eta_S)\right).
\end{equation}

As a final step we introduce the new variables   $w=\frac{1}{2}(u_0+v_0)$ and  $z=u_0-v_0$. System \eqref{u0v0} becomes
\begin{align}\label{wz}
R\dot{w}&=Qs_0(1-\al_0)-(B+C)w-A+C\ov{T}\\\notag
R\dot{z}&=Qs_0(\al_1-\al_2)-(B+C)z.\notag
\end{align}
We see that $z\rw z^*=Ls_0(\al_1-\al_2)$ as $t\rw\infty$, and so we set $z=z^*$ in all that follows. We have reduced the study of system \eqref{full} to that of the equation 
\begin{equation}\label{wdot}
R\dot{w}=Qs_0(1-\al_0)-(B+C)w-A+C\ov{T}.
\end{equation}
In terms of $w$, equations \eqref{Teta3} become 
\begin{equation}\label{Teta4}
T(\eta_S)=w+L(1-\al_0)(s(\eta_S)-1),
 \ \ 
T(\eta_N)=w+L(1-\al_0)(s(\eta_N)-1), 
\end{equation}
while \eqref{Tbar3} simplifies to
\begin{equation}\label{Tbar4}
\ov{T}=w-\tx{\frac{1}{2}}Ls_0(\al_2-\al_1)\left(1-\dis \int^{\eta_N}_{\eta_S}s(y) dy\right).
\end{equation} 
Note equation \eqref{Tbar4} states that $w$ is a translation of the global annual mean surface temperature, where the translation depends upon the integral of the insolation distribution function $s(y)$ between the albedo lines.

Coupling the temperature equation \eqref{wdot} with the ice line evolution equations \eqref{EQ-ice-line-cap} gives   a $(w,\eta_S,\eta_N)$-system that can be placed in the form
\begin{equation}
\begin{aligned}
\frac{dw}{dt}&=-\frac{B}{R}\left(w-F(\eta_S, \eta_N)\right)\\
\frac{d\eta_S}{dt}&=-\rho(w-G(\eta_S))\\
\frac{d\eta_N}{dt}&=\rho(w-G(\eta_N)),
\end{aligned}
\label{system}
\end{equation}
where
\begin{equation}\label{F}
F(\eta_S,\eta_N)=\frac{1}{B}\left(Qs_0(1-\al_0)-A+\frac{1}{2}CLs_0(\al_1-\al_2)(1-\dis \int^{\eta_N}_{\eta_S}s(y) dy)\right),
\end{equation}
and\begin{equation}\label{G}
G(\cdot)=-L(1-\al_0)(s(\cdot)-1)+T_c.
\end{equation}

\section{Behavior of the Two Albedo Line System}
\label{Section-Symmetry}

We begin with a discussion of the case in which the critical temperatures at $\eta_S$ and $\eta_N$ are equal, with each denoted $T_c$. We then discuss how different critical temperatures affect the equilibria of the system. This discussion portends analysis to follow in Section \ref{Section-Mass-Balance}. In the full model with the mass balance equations, the critical temperature at the northern albedo line will change depending on whether the climate state is in a glacial period or an interglacial period. 

We consider our system \eqref{system} on the space
\begin{equation}\notag
{\mathcal B}^\pr=\{ (w,\eta_S,\eta_N) : w\in\R, \eta_S,\eta_N\in[-1,1], \eta_S\leq \eta_N \}.
\end{equation}
The restriction of $\eta_S$ and $\eta_N$ to $[-1,1]$ corresponds to the physical boundary of the latitudes at the south and north poles.  The boundary component given by $\eta_S\leq \eta_N$ ensures that we do not have the (nonphysical) situation of the albedo lines crossing (the case where $\eta_S=\eta_N$ indicates a snowball Earth).  In a subsequent paper  system \eqref{system} will be analyzed on the boundary of ${\mathcal B}^\pr$ via the introduction of an appropriately defined Filippov flow (akin in spirit to 
\cite{Barry2017}). In the present work we restrict attention to the interior of the state space; nonetheless, a detailed description of Fillipov flows will be presented in Section \ref{Section-Mass-Balance}, in which their use is needed to analyze a (discontinuous) extension of \eqref{system} in which separate albedo and snow lines are considered.

\subsection{Equal critical temperatures} \label{Section-Tc-Equal}

Let ${\bf Y}_+={\bf Y}_+(w,\eta_S,\eta_N)$ denote the vector field given in \eqref{system}, with  $\psi_+=\psi_+((w,\eta_S,\eta_N),t)$ its associated flow. (The use of the subscript $+$ foreshadows analysis to come in Section \ref{Section-Mass-Balance}.)
 We set $M=1$ in \eqref{s(y)} in all that follows as the use of higher order approximations   yields qualitatively similar results. 

With parameters as in Table 1, system \eqref{system}   has two equilibria in ${\mathcal B}^\pr$ given by
\begin{align*}
Q^u_+&=(w^u_+,(\eta_S)^u_+,(\eta_N)^u_+)=( -17.118, -0.249, 0.249) \ \mbox{ and } \\ Q^s_+&=(w^s_+,(\eta_S)^s_+,(\eta_N)^s_+)=(5.188, -0.955, 0.955),
\end{align*}
each lying in the plane $\eta_N=-\eta_S$. 
As the Jacobian $J{\bf Y}_+(Q^s_+)$ has  eigenvalues $-15.85, -15.05$ and  $-1.10$,   $Q^s_+$ is a stable node for the flow $\psi_+$.  (One can check equilibrium  $Q^u_+$ is a saddle having 2-dimensional stable manifold.)  Note the equilibrium $Q^s_+$ corresponds  to small, symmetric ice caps,  while    $Q^u_+$ corresponds to (unstable) large, symmetric ice caps.  These results agree with earlier studies where the albedo lines are assumed to be symmetric across the equator (e.g., \cite{McGehee2014,Widiasih2013}).

To help visualize these structures, we plot the $w$-nullcline for ${\bf Y}_+$ (green), together with the  curve of intersection of the $\eta_S$- and $\eta_N$-nullclines for ${\bf Y}_+$ (red) in Figure \ref{FIG-Y-structures}. The intersection of the red curve and the green surface yields the two  equilibria in ${\mathcal B}^\pr$ for \eqref{system}. Also plotted in Figure \ref{FIG-Y-structures}
is the projection of the curve of intersection of the $\eta_S$- and $\eta_N$-nullclines for ${\bf Y}_+$ (red) in the $\eta_S\eta_N$-plane, which can be shown to be  the line $\eta_N=-\eta_S$.

We pause to comment on the role played by the parameter $T_c$, which appears in the $\dot{\eta}_S$- and 
$\dot{\eta}_N$-equations in system \eqref{system}. An increase in $T_c$ serves to translate the $\eta_S$- and $\eta_N$-nullclines up, that is, the red curve in the left plot in Figure  \ref{FIG-Y-structures} moves up while the $w$-nullcline remains unchanged. This causes $Q^s_+$ and $Q^u_+$ to move towards each other ($(\eta_S)_{+}^s$ and $(\eta_N)_{+}^s$ move symmetrically toward the equator, $(\eta_S)_{+}^u$ and $(\eta_N)_{+}^u$ move symmetrically toward their respective poles), corresponding to larger stable ice caps at equilibrium. A sufficiently large increase in $T_c$ leads to a saddle-node bifurcation in which the $\eta_S$- and $\eta_N$-nullclines tangentially intersect the $w$-nullcline before passing above the $w$-nullcline.

Similarly, a decrease in $T_c$ from $-10^\circ$C moves $Q^s_+$ and $Q^u_+$  away from each other. A sufficiently negative $T_c$ first leads to $(\eta_S)_{+}^s=-1$ and $(\eta_N)_{+}^s=1$, corresponding to a ``stable" ice-free Earth. Further decreasing $T_c$ leads to $Q_{+}^s$ leaving $\mathcal{B}'$ and, eventually, $(\eta_S)_{+}^u=(\eta_N)_{+}^u$, corresponding to an ``unstable'' completely glaciated Earth. An even further decrease in $T_c$ causes $Q_{+}^u$ to leave $\mathcal{B}'$ as well. Formalizing these statements requires consideration of the dynamics on the boundary of ${\mathcal B}^\pr$, which will appear in future work.  We note the range of $T_c$-values used in the following section ensures the existence of two equilibria for system \eqref{system} within the interior of ${\mathcal B}^\pr$.

\begin{figure}
\begin{center}
\includegraphics[width=6in,trim = 1.5in 7in 1in  1in, clip]{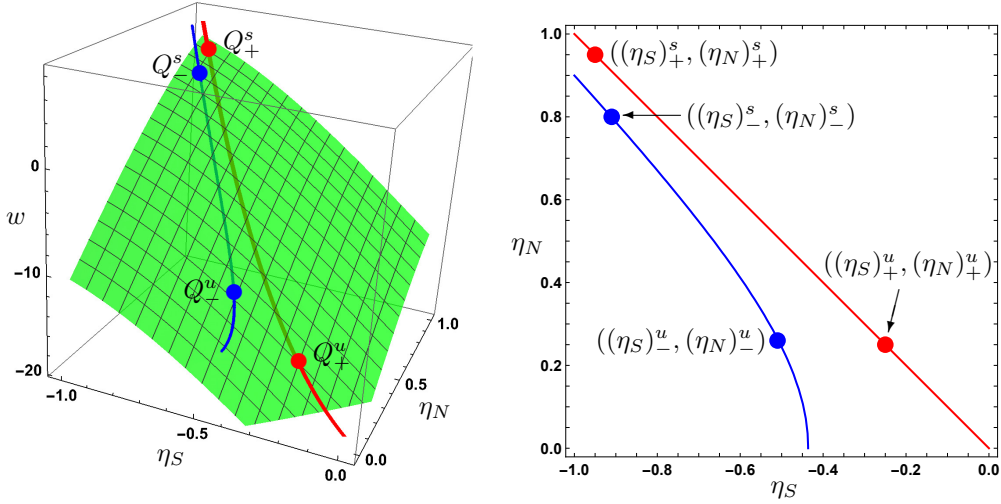}\\
\caption{{ {\em Left}: The $w$-nullcline (green), and the curves of intersection of the   of the $\eta_S$- and $\eta_N$-nullclines for ${\bf Y}_+$ (red) when $\Tcsm=\Tcnm=-10^\circ$C and ${\bf Y}_-$ (blue) when $\Tcsm=-10^\circ$C and $\Tcnm=-5^\circ$C. \ {\em Right}: The projections of  the curves of intersection     of the $\eta_S$- and $\eta_N$-nullclines for ${\bf Y}_+$ (red) and ${\bf Y}_-$ (blue) in the $\eta_S\eta_N$-plane.
}} 
\label{FIG-Y-structures}
 \end{center}
\end{figure}

\subsection{Different critical temperatures}

While the critical temperature value $T_c=-10^\circ$C is often used in the energy balance climate literature for the Earth, other values have been used as well. For example, $T_c$ was set to $0^\circ$C in \cite{pierre} when modeling a generally colder world. A linear drift  in $T_c$ from $-13^\circ$C
to $-3^\circ$C was incorporated in the glacial cycle model presented in \cite{tzip2003} to represent the cooling of the deep ocean during the Pleistocene. 

We thus consider the case in which the critical temperature $T_{cS}$ at $\eta_S$ differs from the critical temperature $T_{cN}$ at $\eta_N$, a possibility easily investigated with our model. Consider the system
\begin{equation}
\begin{aligned}
\frac{dw}{dt}&=-\frac{B}{R}\left(w-F(\eta_S, \eta_N)\right)\\
\frac{d\eta_S}{dt}&=-\rho(w-G_S(\eta_S))\\
\frac{d\eta_N}{dt}&=\rho(w-G_N(\eta_N)),
\end{aligned}
\label{system-NS}
\end{equation}
where
\begin{equation}\label{GS}
G_S(\eta_S)=-L(1-\al_0)(s(\eta_S)-1)+T_{cS}\quad\text{and}\quad G_N(\eta_N)=-L(1-\al_0)(s(\eta_N)-1)+T_{cN}.
\end{equation}
We let ${\bf Y}_-={\bf Y}_-(w,\eta_S,\eta_N)$ denote the vector field given in system \eqref{system-NS}, for which $w,\eta_S,\eta_N\in\mathcal B^\pr$ and the parameters are given in  Table 1, with the sole exception being that we allow  $T_{cN}>-10^\circ$C. (The use of the subscript `$-$' will become clear in Section \ref{Section-Mass-Balance}.)
We let $\psi_-=\psi_-((w,\eta_S,\eta_N),t)$ denote the flow associated with \eqref{system-NS}.

The scenario $T_{cS}=-10^\circ$C and $T_{cN}=-5^\circ$C is depicted in Figure \ref{FIG-Y-structures}. The green $w$-nullcline remains unchanged as the critical temperature does not appear in the $\dot{w}$-equation. Recall the red curve in the left plot in Figure \ref{FIG-Y-structures} is the intersection of the $\eta_S$- and $\eta_N$-nullclines in the symmetric case ($T_{cS}=T_{cN}=-10^\circ$C in \eqref{system-NS}). The blue curve in the left plot in Figure \ref{FIG-Y-structures} is the intersection  of the $\eta_S$- and $\eta_N$-nullclines for \eqref{system-NS} when $T_{cS}=-10^\circ$C and $T_{cN}=-5^\circ$C. Also plotted in Figure  \ref{FIG-Y-structures} are the projections of the red and blue curves in the 
$\eta_S\eta_N$-plane.

Keeping $T_{cS}=-10^\circ$C fixed, we see in Figure \ref{FIG-Y-structures} that  an increase of $T_{cN}$ from $-10^\circ$C to $-5^\circ$C yields an equilibrium  point $Q^s_{-}=(w^s_-,(\eta_S)^s_-,(\eta_N)^s_-)$ for ${\bf Y}_{-}$ near $Q^s_+$    with $(\eta_S)^s_->(\eta_S)^s_+$ and $(\eta_N)^s_-<(\eta_N)^s_+$ (that is, each albedo line has moved equatorward). Given that $Q^s_+$ is a stable node for ${\bf Y}_+$, and using the fact ${\bf Y}_+$ and ${\bf Y}_{-}$ are polynomial vector fields (and hence smooth, including in the critical temperature parameter), a
 sufficiently small translation  ensures  that  $Q^s_-$ is a stable node for the  flow $\psi_-$. We note there is a saddle $Q^u_-$ for $\psi_{-}$, near $Q^u_+$, as well.
 As $T_{cN}$ decreases to $-10^\circ$C, $Q^s_-\rw Q^s_+$ and $Q^u_-\rw Q^u_+$.

We note the behavior of the albedo lines for the flow exhibits an asymmetry when $T_{cN}\not= T_{cS}$.  As can be gleaned from Figure \ref{FIG-Y-structures}, when $T_{cN}=-5^\circ$C, $\eta_N(t)\rw 0.795$, a larger ice cap than in the case $\Tcn=-10^\circ$C. Of interest is the fact the Southern Hemisphere albedo line also moves to a larger (asymmetric)  ice cap position ($(\eta_S)^s_-=-0.907$), relative to its stable position when $T_{cN}=-10^\circ$C ($(\eta_S)^s_+=-0.955$). That is, the coupling of $\eta_S$ and $\eta_N$ provided by the $w$-equation in \eqref{system-NS} furnishes a linkage between the Northern and Southern Hemispheres: a different stable $\eta_N$ position yields a different stable $\eta_S$ position, even though $T_{cS}$ remains constant at $-10^\circ$C.

We plot the evolution of the albedo lines starting with large initial ice caps ($\eta_N=-\eta_S=0.5$) for system \eqref{system-NS} with $T_{cS}=-10^\circ$C fixed and various $T_{cN}$-values in Figure \ref{FIG-asym-time-series}.  
Similar behavior occurs if the Northern Hemisphere critical temperature is left at $T_{cN}=-10^\circ$C and $T_{cS}$ is increased.

\begin{figure}
\begin{center}
\includegraphics[width=6in,trim = 1.6in 7.5in 1in  1in, clip]{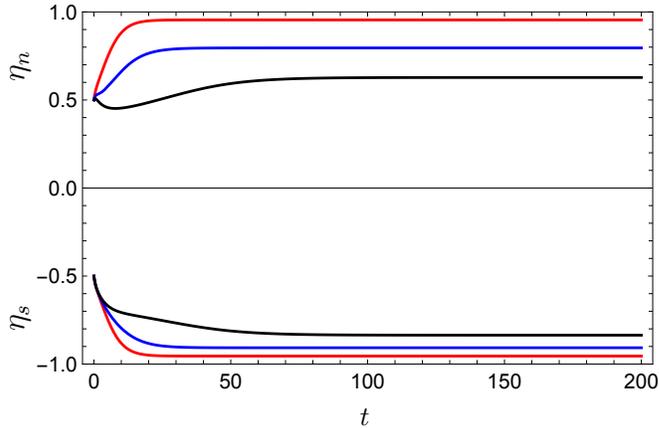}\\
\caption{{\small The evolution of the albedo lines under the flows $\psi_{\pm}$ with $\Tcs=-10^\circ$C. {\em Red}: $\Tcn=-10^\circ$C. \ {\em Blue}:  $\Tcn=-5^\circ$C. \ {\em Black}:  $\Tcn=-2^\circ$C.
}} 
\label{FIG-asym-time-series}
 \end{center}
 
\end{figure}

\section{Mass-Balance Can Drive Synchronous Global Glacial Cycles}
\label{Section-Mass-Balance}

In this section we incorporate the glacial cycle model introduced in \cite{wwhm} into the Northern Hemisphere of our global temperature, two albedo line model. The motivation for this model enhancement stems in part from the glacial cycle theory of M. Milankovitch, which asserts that changes in Northern Hemisphere high latitude insolation, due to variations in Earth's orbital elements over long time scales, comprise the principle forcing mechanism of the glacial-interglacial cycles  \cite{hays,milank, raylisnic,uemura}.

The glacial cycle model discussed below exhibits a threshold behavior, ``flip-flopping" between glacial advance and retreat based on a conceptual ice sheet mass balance equation. For more detailed background and motivation for this aspect of the model, the reader is referred to \cite{wwhm}.

\subsection{Mass balance flip-flop}

We begin by summarizing the process of adding a conceptual mass-balance variable $\xi_N$  in an effort to  model the accumulation and ablation of the Northern Hemisphere glaciers, as presented in \cite{wwhm}.  

Let $\xi_N$ denote the latitude of the edge of the Northern Hemisphere glaciers.  While the evolution of the ice edge $\xi_N$ is  driven in the abstract by a mass balance principle,  we
do not explicitly consider ice volume and mass here.

To construct the equations governing $\xi_N$ during glacial or interglacial periods, we assume snow is accumulating between $\eta_N$ and the north pole at a (dimensionless) rate $a$, while ablation
occurs between $\xi_N$ and $\eta_N$ at a (dimensionless) rate $b$. We note accumulation and ablation of ice play an important role in the advance, retreat, and size of a glacier (see, e.g.,  \cite{weertman}).  In particular, increased ablation rates when the glacier is retreating are key to obtaining the rapid interglacial retreats that are present in paleoclimate records \cite{abe-ouchi}. In this model it is the reduced albedo of the region between $\xi_N$ and $\eta_N$ due to factors such as aging snow \cite{gallee}, superglacial forest growth \cite{wright}, and dust loading \cite{peltier} that contributes to the increased ablation rate during glacial retreats.

We first define a critical ablation rate $b$.  Conceptually, the equation
\begin{equation}
b(\eta_N-\xi_N)=a(1-\eta_N)
\end{equation}
defines the Northern Hemisphere albedo- and ice-edge latitudes where ablation (left hand side) and accumulation (right hand side) are equal.  Rearranging this equation allows us to see that if
\begin{equation}
\xi_N>\left(1+\frac{a}{b}\right)\eta_N-\frac{a}{b}
\end{equation}
then the ablation $b(\eta_N-\xi_N)$ will be less than accumulation $a(1-\eta_N)$ and we should be in a glacial period (with the ice edge advancing).  In a glacial period the ablation is less than the critical ablation rate, so we let $b_-<b$ and set
\begin{equation}
    \dot\xi_N=\epsilon(b_-(\eta_N-\xi_N)-a(1-\eta_N)),\text{ when }\xi_N>\left(1+\frac{a}{b}\right)\eta_N-\frac{a}{b},
\end{equation}
with $\eps>0.$ 
On the other hand if
\begin{equation}
\xi_N<\left(1+\frac{a}{b}\right)\eta_N-\frac{a}{b}
\end{equation}
then the ablation $b(\eta_N-\xi_N)$ will be greater than accumulation $a(1-\eta_N)$ and we should be in an interglacial period with a large ablation rate and the ice edge retreating.  We let $b_+>b$ and set
\begin{equation}
    \dot\xi_N=\epsilon(b_+(\eta_N-\xi_N)-a(1-\eta_N)),\text{ when }\xi_N<\left(1+\frac{a}{b}\right)\eta_N-\frac{a}{b}.
\end{equation}
When $\xi_N - \left(\left(1+\frac{a}{b}\right)\eta_N-\frac{a}{b}\right)$ passes through 0, the system flips from one with a relatively low ablation rate to one with a relatively high ablation rate, or vice versa.

While fixing the critical temperature $T_{cS}=-10^\circ$C at $\eta_S$, we allow for different critical temperatures    at $\eta_N$ during the advance ($T^-_{cN}$) and retreat ($T^+_{cN}$) of the Northern Hemisphere glaciers, as intimated in Section \ref{Section-Symmetry}. We choose $T^-_{cN}>T^+_{cN}$ as  in \cite{wwhm}.

We are thus lead to consider the following $(w,\eta_S,\eta_N,\xi_N)$-system, one having discontinuities on a  hyperplane corresponding to points at which the Northern Hemisphere ice sheet mass balance equals zero.

\subsection{The full system: Southern and northern albedo lines with mass-balance flip-flop in the Northern Hemisphere}

The $w$- and $\eta_S$-equations remain as in system \eqref{system}, while the flip-flop mechanism   described above is placed  in the   Northern Hemisphere.  The system then assumes the form
\begin{subequations}\label{Nflipflop}
\begin{align}
\dot{w}&=-\tx{\frac{B}{R}}\left(w-F(\eta_S,\eta_N)\right)\label{NflipflopA} \\
\dot{\eta_S} &=-\rho(w-G(\eta_S))\label{NflipflopB} \\ 
\dot{\eta_N} &=\rho(w-H_\pm(\eta_N))\label{NflipflopC} \\ 
\dot{\xi_N} &=\eps(b_\pm(\eta_N-\xi_N)-a(1-\eta_N)), \label{NflipflopD}
\end{align}
\end{subequations}
where $F$ and $G=G_S$ are as in \eqref{F} and \eqref{GS}, respectively, and where  we set 
\begin{equation}\notag
H_+(\eta_N)=-L(1-\al_0)(s(\eta_N)-1)+T^+_{cN} \ \mbox{ \ and \ } \ H_-(\eta_N)=-L(1-\al_0)(s(\eta_N)-1)+T^-_{cN}.
\end{equation}
The use of the subscript `+' indicates $\xi_N<\left(1+\frac{a}{b}\right)\eta_N-\frac{a}{b}$, so that the ice sheet is retreating in the  Northern Hemisphere.  The subscript `-' indicates $\xi_N>\left(1+\frac{a}{b}\right)\eta_N-\frac{a}{b}$, with the Northern Hemisphere glaciers advancing equatorward in this regime.

The state space for \eqref{Nflipflop} is 
\begin{equation}\notag
{\mathcal B}=\{  (w,\eta_S,\eta_N,\xi_N) : w\in\R, \ \eta_S,\eta_N,\xi_N \in [-1,1],\eta_S\leq \eta_N  \}.
\end{equation}
We note there will be no consideration of the dynamics on the boundary of $\mathcal{B}$ in this paper; the results and analysis to follow pertain to an invariant subset of $\mathcal{B}$ in which $-1<\eta_S<\eta_N<1$.

Recall we are assuming the critical temperatures and ablation rates satisfy  $T^+_{cN}<T^-_{cN}$ \ and \ $b_-<b<b_+$, respectively. Finally, while the analysis   in this section holds for any $M$-value and appropriately chosen parameters, we continue to set $M=1$. Thus,  $G(\eta_S)$ and $H_\pm(\eta_N)$ are each quadratic polynomials, and $F(\eta_S,\eta_N)$ is the difference of a cubic polynomial in $\eta_N$ and a cubic polynomial in $\eta_S$. 

Due to the presence of discontinuities induced by the switching mechanism from Northern Hemisphere glacial advance to retreat (and vice versa) discussed above, we analyze system \eqref{Nflipflop} as a {\em Filippov flow.}

To define the Filippov flow associated with system \eqref{Nflipflop}, we begin by letting
\begin{equation}\label{hsplit}
h:{\mathcal B}\rw \R, \ h(w,\eta_S,\eta_N,\xi_N)=b(\eta_N-\xi_N)-a(1-\eta_N)=(a+b)\eta_N-b\xi_N-a.
\end{equation}
The {\em switching manifold} \cite{dibernardo}, consisting of points in $\mathcal{B}$ at which the critical mass balance $b(\eta_N-\xi_N)-a(1-\eta_N)$ equals 0,  is the hyperplane
\begin{align}\label{switch}
\Sigma &=\{(w,\eta_S,\eta_N,\xi_N) : h(w,\eta_S,\eta_N,\xi_N)=0\}\\\notag
&=\{(w,\eta_S,\eta_N,\xi_N) : \xi_N=(1+\tx{\frac{a}{b}})\eta_N-\tx{\frac{a}{b}}=\gamma(\eta_N)\}.\notag
\end{align}
The system is retreating toward an interglacial period when in the region 
\begin{equation}\label{Splus}
S_+=\{(w,\eta_S,\eta_N,\xi_N) :  h(w,\eta_S,\eta_N,\xi_N)>0\},
\end{equation}
and   advancing to a glacial period when in 
\begin{equation}\label{Sminus}
S_-=\{(w,\eta_S,\eta_N,\xi_N) :  h(w,\eta_S,\eta_N,\xi_N)<0\}.
\end{equation}

Let ${\bf X}_+$ denote system \eqref{Nflipflop} when choosing $H_+$ and $b_+$, and let ${\bf X}_-$ denote system \eqref{Nflipflop} when choosing $H_-$ and $b_-$.  
For ${\bf v}=(w,\eta_S,\eta_N,\xi_N)\in \mathcal{B},$ we then consider  the differential inclusion
\begin{equation}\label{inclusion}
\dot{\bf v}\in {\bf X}({\bf v})=
\begin{cases}
{\bf X}_-({\bf v}), &  {\bf v}\in S_-\\ 
\{ (1-p){\bf X}_-({\bf v})+p{\bf X}_+({\bf v}) : p\in [0,1]\}, & {\bf v}\in\Sigma \\ 
{\bf X}_+({\bf v}), &  {\bf v}\in S_+ . 
\end{cases}
\end{equation}
Note each of ${\bf X}_\pm$ is smooth on $S_\pm.$ While in $S_-$, solutions are unique with flow $\phi_-({\bf v},t)$ corresponding to system  $\dot{\bf v}={\bf X}_-({\bf v})$. Similarly, solutions in 
$S_+$ are unique with flow $\phi_+({\bf v},t)$ given by system $\dot{\bf v}={\bf X}_+({\bf v})$. For ${\bf v}\in\Sigma, \ \dot{\bf v}$ must lie in the closed convex hull of the two vectors ${\bf X}_-({\bf v})$ and ${\bf X}_+({\bf v})$.

A solution to \eqref{inclusion} {\em in the sense of Filippov} is an absolutely continuous function ${\bf v}(t)$ satisfying $\dot{\bf v}\in {\bf X}({\bf v})$ for almost all $t$. (Note $\dot{\bf v}(t)$ is not defined at times for which ${\bf v}(t)$ arrives at or leaves $\Sigma$.) Given that ${\bf X}_\pm$ are continuous on $S_\pm\cup \Sigma$, the set-valued map ${\bf X}({\bf v})$ is upper semi-continuous, and closed, convex and bounded for all ${\bf v}\in \mathcal{B}$ and $t\in\R$. This implies that for each ${\bf v}_0\in \mbox{Int}(\mathcal{B})$ there is a solution ${\bf v}(t)$ to differential inclusion \eqref{inclusion} in the sense of Filippov, defined  on an interval $[0,t_f]$, with ${\bf v}(0)={\bf v}_0$   \cite{leine}.

\subsection{Regular and  virtual equilibria}

As equations \eqref{NflipflopA}--\eqref{NflipflopC} decouple from \eqref{NflipflopD}, we first note that the vector fields   corresponding to  \eqref{NflipflopA}--\eqref{NflipflopC} are precisely  the vector fields ${\bf Y}_\pm$ from Section \ref{Section-Symmetry} with associated flows $\psi_\pm=\psi_{\pm}((w,\eta_S,\eta_N),t)$. 

Let $W(Q^s_+)$ denote the $\psi_+$-stable set of $Q^s_+$, and let $W(Q^s_-)$ denote the $\psi_-$-stable set of $Q^s_-$, noting that each stable set is a subset of $\mathcal{B}^\pr$ with interior.
By smoothness of the vector fields ${\bf Y}_\pm$ (each smooth in the critical temperature as well), we
 choose $\Tcnm$ close enough to $\Tcnp$ to ensure that
\begin{equation}\label{3Dstablesets}
Q^s_-\in W(Q^s_+) \ \mbox{ and } \ 
 Q^s_+\in W(Q^s_-),
 \end{equation}
the motivation for which will become apparent below. Numerical investigations indicate that conditions \eqref{3Dstablesets} hold for $\Tcnm$ as large as $-1^\circ$C. We also note \eqref{3Dstablesets} holds for all $\eps>0$, where $\eps$ governs the rate of the mass balance response to albedo change as in equation \eqref{NflipflopD}.

Returning to the vector fields ${\bf X}_\pm$ associated with the full system \eqref{Nflipflop},   ${\bf X}_+$ then admits two equilibria in ${\mathcal B}$
\begin{align*}
P^u_+&=\left(w^u_+,(\eta_S)^u_+,(\eta_N)^u_+,(1+\tx{\frac{a}{b_+}})(\eta_N)^u_+-\tx{\frac{a}{b_+}}\right) \ \mbox{ and } \\ 
P^s_+&=\left(w^s_+,(\eta_S)^s_+,(\eta_N)^s_+,(1+\tx{\frac{a}{b_+}})(\eta_N)^s_+-\tx{\frac{a}{b_+}}\right).
\end{align*}
As the fourth column of the Jacobian matrix $J{\bf X}_+$ is $[0 \ 0 \ 0 \ \tx{-}\eps b_+]^T$, we conclude $P^s_+$ is a stable node for the retreating flow $\phi_+$ for all $\eps>0$ (while $P^u_+$ is a saddle with 3-dimensional stable manifold).

We would like to know which side of the switching manifold $\Sigma$ the equilibrium $P_{+}^s$ lies in. A computation yields 
\begin{equation}\notag
h(P^s_+)= a \left(1-(\eta_N)^s_+\right)\left(\tx{\frac{b}{b_+}}-1\right)<0
\end{equation}
due to our assumption $b_+>b$, implying $P^s_+\in S_-$ (see equation \eqref{Sminus}). Thus $\phi_+$-trajectories are unable to converge to the stable node $P^s_+$ as they must first cross the switching manifold $\Sigma.$ Such an equilibrium point for a discontinuous vector field is known as a virtual equilibrium point \cite{dibernardo}, as defined in the introduction.

In a similar fashion, and recalling our choice of the parameter $\Tcnm$ as discussed above, the vector field ${\bf X}_-$ admits two equilibria
\begin{align*}
P^u_-&=\left(w^u_-,(\eta_S)^u_-,(\eta_N)^u_-,(1+\tx{\frac{a}{b_-}})(\eta_N)^u_--\tx{\frac{a}{b_-}}\right) \ \mbox{ and } \\ 
P^s_-&=\left(w^s_-,(\eta_S)^s_-,(\eta_N)^s_-,(1+\tx{\frac{a}{b_-}})(\eta_N)^s_--\tx{\frac{a}{b_-}}\right),
\end{align*}
with $P^s_-$ a stable node for all $\eps>0$ (and $P^u_-$ a saddle having 3-dimensional stable manifold). Importantly,
\begin{equation}\notag
h(P^s_-)= a \left(1-(\eta_N)^s_-\right)\left(\tx{\frac{b}{b_-}}-1\right)>0
\end{equation}
since $b_-<b$. Hence $P^s_-$ is also a virtual equilibrium point for \eqref{inclusion} as $P^s_-\in S_+$ (see equation \eqref{Splus}). 

Let $W(P^s_+)$ denote the stable set of $P^s_+$ under the retreating flow $\phi_+$, and let $W(P^s_-)$ denote the stable set of $P^s_-$ under the advancing flow $\phi_-$. Recall we are choosing $\Tcnm$ close enough to $\Tcnp$ to ensure conditions \eqref{3Dstablesets}, that $Q_{\pm}^s$ were in each other's stable sets under the three-dimensional flows $\psi_\pm$. Given the decoupling of equations \eqref{NflipflopA}--\eqref{NflipflopC} from \eqref{NflipflopD}, along with the linear nature of equation \eqref{NflipflopD}, note \eqref{3Dstablesets} implies 
\begin{equation}\label{4Dstablesets}
P^s_-\in W(P^s_+) \ \mbox {and } \ P^s_+\in W(P^s_-).
\end{equation}
This observation, which holds for all $\eps>0$,  will play a key role in elucidating the flip-flop behavior of our model.

\subsection{Trajectories intersecting the switching manifold}

We begin by determining where on the 3-dimensional switching manifold $\Sigma$ the vector fields ${\bf X}_\pm$ are tangent, as such submanifolds may bound sliding regions \cite{leine}. To that end, $\Sigma$ is a hyperplane with normal vector
${\bf N}= [0 \  \ 0 \  \ 1+\tx{\frac{a}{b}} \ \ \tx{-}1 ]^T$. \ For ${\bf v}\in\Sigma$, a computation yields ${\bf X}_+ \perp {\bf N}$ if and only if
\begin{equation}\label{hplus}
w=H_+(\eta_N)+\frac{a\eps (1-\eta_N)(b_+-b)}{\rho (a+b)}=h_+(\eta_N).
\end{equation} 
Thus, ${\bf X}_+$ is tangent to $\Sigma$ at points contained in the set 
\begin{equation}\label{omega+}
\Omega_+=\{(h_+(\eta_N), \eta_S,\eta_N, \gamma(\eta_N)) : \eta_s,\eta_N\in [-1,1]\},
\end{equation}
a 2-dimensional submanifold of $\Sigma$ (recall $\gamma(\eta_N)$ is as defined in \eqref{switch}).  In a similar fashion, one finds ${\bf X}_- \perp {\bf N}$ at ${\bf v}\in\Sigma$ if and only if
\begin{equation}\label{omega-}
{\bf v}\in \Omega_-=\{(h_-(\eta_N), \eta_S,\eta_N, \gamma(\eta_N)) : \eta_S,\eta_N\in [-1,1]\},
\end{equation}
where 
\begin{equation}\label{hminus}
h_-(\eta_N)= H_-(\eta_N)+\frac{a\eps (1-\eta_N)(b_--b)}{\rho (a+b)}.
\end{equation}

We consider the case in which the surfaces of tangency $\Omega_\pm$ on the switching manifold $\Sigma$ do not intersect in $\mathcal{B}$. A tedious and straightforward calculation reveals that if the time constant $\eps$ in \eqref{NflipflopD} satisfies
\begin{equation}\label{epsbound}
\eps<\frac{(\Tcnm-\Tcnp)\rho (a+b)}{2a(b_+-b_-)},
\end{equation}
then $h_+(\eta_N)<h_-(\eta_N)$ for $\eta_N\in [-1,1].$ We assume $\eps$ satisfies \eqref{epsbound} in all that follows.

Having identified the sets of tangencies on either side of the switching manifold, we must now determine where the vector fields $\mathbf{X}_\pm$ point into the switching manifold $\Sigma$ and where they point away. Via further computations, we see for ${\bf v}=(w,\eta_S,\eta_N,\gamma(\eta_N))\in\Sigma$,
\begin{itemize}
\item[(i)] ${\bf X}_+({\bf v}) \dotp {\bf N}>0$ if $w>h_+(\eta_N)$, so that ${\bf X}_+({\bf v})$ points into $S_+$ if $w>h_+(\eta_N)$,
\item[(ii)] ${\bf X}_+({\bf v}) \dotp {\bf N}<0$ if $w<h_+(\eta_N)$, so that ${\bf X}_+({\bf v})$ points into $S_-$ if $w<h_+(\eta_N)$,
\item[(iii)] ${\bf X}_-({\bf v}) \dotp {\bf N}>0$ if $w>h_-(\eta_N)$, so that ${\bf X}_-({\bf v})$ points into $S_+$ if $w>h_-(\eta_N)$, and
\item[(iv)] ${\bf X}_-({\bf v}) \dotp {\bf N}<0$ if $w<h_-(\eta_N)$, so that ${\bf X}_-({\bf v})$ points into $S_-$ if $w<h_-(\eta_N)$.
\end{itemize}
In particular, a $\phi_+$-trajectory that intersects $\Sigma$ at a point for which $w<h_+(\eta_N)<h_-(\eta_N)$ passes transversally into $S_-$ following the Filippov convention, and continues in $S_-$ under the flow $\phi_-$. The subset  $\Sigma_+\subset \Sigma$ defined by
\begin{equation}\label{sigmaplus}
\Sigma_+=\{(w,\eta_S,\eta_N,\gamma(\eta_N)) : w<h_+(\eta_N), \ \eta_S,\eta_N\in [-1,1]\}
\end{equation}
is therefore known as a {\em crossing region} for the Filippov flow \cite{leine}. Similarly, a $\phi_-$-trajectory that intersects $\Sigma$ at a point in the set
\begin{equation}\label{sigmaminus}
\Sigma_-=\{(w,\eta_S,\eta_N,\gamma(\eta_N)) : w>h_-(\eta_N), \ \eta_S,\eta_N\in [-1,1]\}
\end{equation}
passes transversally into $S_+$   and continues by following    the flow $\phi_+$. In this fashion $\Sigma_-\subset\Sigma$ is also a crossing region for the Filippov flow. We note solutions to system \eqref{inclusion} that pass through $\Sigma_\pm$ are unique, though not differentiable at points of intersection with $\Sigma_\pm$.

Finally, consider the subset of the switching manifold defined by
\begin{equation}\label{slide}
\Sigma^{\mbox{\scriptsize  SL}}=\{(w,\eta_S,\eta_N,\gamma(\eta_N)) : h_+(\eta_N)<w<h_-(\eta_N), \ \eta_S,\eta_N\in [-1,1]\}.
\end{equation}
Note  ${\bf X}_+$ points into $S_+$ and  ${\bf X}_-$ points into $S_-$ at all points in $\Sigma^{\mbox{\scriptsize  SL}}$. The subset $\Sigma^{\mbox{\scriptsize SL}}$ of the switching manifold $\Sigma$ is therefore a {\em  repelling sliding region} \cite{leine}; Filippov's approach does not provide for unique solutions ${\bf v}(t)$ in forward time if ${\bf v}(0)\in 
\Sigma^{\mbox{\scriptsize SL}}$ \cite{fil}.  Notice that for $\eps$ chosen to satisfy \eqref{epsbound}, the repelling sliding region $\Sigma^{\mbox{\scriptsize SL}}$ sits between the tangency sets $\Omega_+$ and $\Omega_-$, thereby separating $\Sigma_+$ and $\Sigma_-$, throughout $\Sigma$.   As neither advancing nor retreating trajectories approach $\Sigma^{\mbox{\scriptsize  SL}}$ in forward time, the repelling sliding region will play no role in the analysis to come. Projections of $\Sigma$ and its subsets described above into $(w,\eta_N,\xi_N)$-space are plotted in Figure \ref {FIG-Sigma-proj}.

We see that  a trajectory for system \eqref{inclusion} with initial condition ${\bf v}(0)\in S_+ \cap W(P^s_+)$ will ``retreat" under the flow $\phi_+$, intersecting $\Sigma_+$ prior to approaching the virtual equilibrium $P^s_+$ and thereby switching to the ``advancing" flow $\phi_-$. With the parameters $b_\pm$ chosen appropriately (as discussed in the following section), this $\phi_-$-trajectory will intersect $\Sigma_-$ on its way to  approaching the virtual equilibrium $P^s_-$, thereby flipping back to the retreating  flow $\phi_+$. 

We now prove the  dynamic described above is capable of producing a unique (nonsmooth) attracting periodic orbit that, in terms of the model,   represents  the  glacial-interglacial cycles, entirely a consequence of the flip-flop in the  Northern Hemisphere. 

\begin{figure}
\begin{center}
\includegraphics[width=5.6in,trim = 1.6in 6.6in 1in  1in, clip]{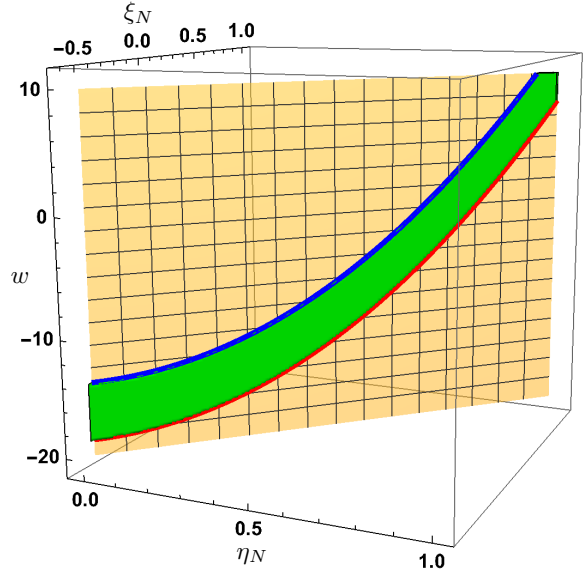}\\
\caption{{\small Projections into $(w,\eta_N,\xi_N)$-space of the switching manifold $\Sigma$ (gold), surfaces of tangency $\Omega_+$ (red) and $\Omega_-$ (blue), and the repelling sliding region $\Sigma^{\mbox{\scriptsize SL}}$ (green). $\Sigma_-$ projects to the region above the blue curve, while  $\Sigma_+$   projects to the region below the red curve.  
}}
\label{FIG-Sigma-proj}
 \end{center}
\end{figure}

\subsection{A  return map for the Filippov flow}

In constructing the return map, it is instructive to first consider the case in which $\eps=0$. Note when  $\eps=0$ the retreating flow $\phi_+$ has an attracting line $\ell_+$ of equilibrium points. That is, if ${\bf v}(0)\in S_+\cap W(P^s_+), \ \phi_+({\bf v}(0),t)\rw (w^s_+, (\eta_S)^s_+, (\eta_N)^s_+,\xi_N(0))\in\ell_+$ as $ t\rw\infty$. We remark that $\ell_+$ intersects $\Sigma$ at the point
\begin{equation}\label{Rplus}
R_+=(w^s_+, (\eta_S)^s_+, (\eta_N)^s_+, \gamma((\eta_N)^s_+)).
\end{equation}
Similarly, the advancing flow $\phi_-$ has an attracting  line $\ell_-$ of equilibria when $\eps=0$; if 
 ${\bf v}(0)\in S_-\cap W(P^s_-), \ \phi_-({\bf v}(0),t)\rw (w^s_-, (\eta_S)^s_-, (\eta_N)^s_-,\xi_N(0))\in\ell_-$ as $ t\rw\infty$. The line $\ell_-$ intersects $\Sigma$ at the point
\begin{equation}\label{Rminus}
R_-=(w^s_-, (\eta_S)^s_-, (\eta_N)^s_-, \gamma((\eta_N)^s_-)).
\end{equation}
These points of intersection will help us determine where trajectories are crossing the switching manifold.  If we have $R_-\in W(P^s_+)$ and if $R_+\in W(P^s_-)$, the existence of a periodic orbit of  Filippov system \eqref{inclusion} would seem plausible.

Now suppose $\eps>0$ is much smaller then the time constant $\rho$ in \eqref{Nflipflop}. The $\phi_+$-trajectory of a point ${\bf v}(0)\in S_+\cap W(P^s_+)$ will first approach the line $\ell_+$ with $\xi_N(t)$ varying little from $\xi_N(0)$, and then follow $\ell_+$ toward  the switching manifold, intersecting $\Sigma$ at a point near $R_+$.  Absent the presence of the switching manifold, this dynamic is reminiscent  of problems addressed by geometric singular perturbation theory for smooth dynamical systems having multiple time scales \cite{jones}.

Note that as $b_+$ decreases to $b,$ the $\phi_+$-stable node  $P^s_+$ approaches $R_+$ because only the fourth coordinate of $P_{+}^s$ varies with $b_+$.  Indeed, the fourth coordinate of $P_{+}^s$ is  $(\xi_N)_{+}^s=(1+\frac{a}{b_+})(\eta_N)_{+}^s - \frac{a}{b_+}$ which limits to $\gamma((\eta_N)_{+}^s)$ as $b_+\searrow b$. Hence we will assume $b_+$ is chosen to ensure that the point $R_+$ is in the stable set of $P_{+}^s$ under the retreating flow $\phi_+$, $ W(P^s_+)$. Recall that $P^s_+$ is also in the $W(P^s_-)$, the stable set of $P_{-}^s$ under the advancing flow $\phi_-$ \eqref{4Dstablesets},  which implies the existence of a neighborhood $U$ of $P^s_+$ with $U\subset W(P^s_-)$. We then additionally assume  $b_+$ is close enough to $b$ to ensure $R_+\in U$, so that $R_+\in W(P^s_-)\cap W(P^s_+)$, an inclusion that holds for all $\eps>0.$

In a similar vein, the $\phi_-$-trajectory of a point ${\bf v}(0)\in S_-\cap W(P^s_-)$ will first approach the line $\ell_-$ with $\xi_N(t)$ remaining roughly constant, and then follow $\ell_-$ toward  the switching manifold, intersecting $\Sigma$ at a point near $R_-$.   
As $b_-\nearrow b,$ the $\phi_-$-stable node  $P^s_-$ approaches $R_-$ (and so we assume $R_-\in W(P^s_-)$). As  $P^s_-\in W(P^s_+)$ \eqref{4Dstablesets}, there then  exists  a neighborhood $U$ of $P^s_-$ with $U\subset W(P^s_+)$.  Choosing $b_-$ sufficiently close to $b$ then ensures that  $R_-\in W(P^s_+)\cap W(P^s_-)$, which we again note  holds for all $\eps>0$.

The above choices of parameters $b_+$ and $b_-$ (and of $\Tcnm$ previously) now allow for the construction of a (nonsmooth) return map for the Filippov flow \eqref{inclusion} as follows. 

We begin by noting that $R_+$ is in the crossing region $\Sigma_+$ (where trajectories cross from $S_+$ to $S_-$) because $w^s_+=H_+((\eta_N)^s_+)<h_+((\eta_N)^s_+)$ by \eqref{hplus}. As we have just seen that $R_+\in W(P^s_-),$ we can pick $\delta_1>0$ such that
\begin{equation}\label{Vplus}
V_+=\ov{B_{\delta_1}}(R_+)\cap\Sigma\subset W(P^s_-)\cap \Sigma_+.
\end{equation}
Recalling $P^s_-$ is a virtual equilibrium point for the advancing flow $\phi_-$, for any ${\bf v}\in V_+$ and for any $\eps>0$  there exists a time $t=t({\bf v},\eps)$ such that $\phi_-({\bf v}, t({\bf v},\eps))$ reaches the crossing region  $\Sigma_-$ (where trajectories cross from $S_-$ to $S_+$). We note $t({\bf v},\eps)\rw\infty$ as $\eps\rw 0$ for future reference. 
We may then define a continuous mapping, for any $\eps>0$, given by
\begin{equation}\label{returnminus}
r^\eps_- :V_+\rw \Sigma_-, \ r^\eps_-({\bf v})=\phi_-({\bf v}, t({\bf v},\eps)).
\end{equation}

That $R_-\in \Sigma_-$ follows from the fact that $w^s_-=H_-((\eta_N)^s_-)>h_-((\eta_N)^s_-)$ by \eqref{hminus}.  Recalling $R_-\in W(P^s_+),$ we can pick $\delta_2>0$ such that
\begin{equation}\label{Vminus}
V_-=\ov{B_{\delta_2}}(R_-)\cap\Sigma\subset W(P^s_+)\cap \Sigma_-.
\end{equation}
Noting $P^s_+$ is a virtual equilibrium point for the retreating flow $\phi_+$, for any ${\bf v}\in V_-$ and for any $\eps>0$  there exists $t=t({\bf v},\eps)$ such that $\phi_+({\bf v}, t({\bf v},\eps))\in \Sigma_+$. Hence for any $\eps>0$, we define the continuous mapping
\begin{equation}\label{returnplus}
r^\eps_+ :V_-\rw \Sigma_+, \ r^\eps_+({\bf v})=\phi_+({\bf v}, t({\bf v},\eps)).
\end{equation}
We are now in a position to prove there exists $\eps>0$ such that $r^\eps=r^\eps_+\circ r^\eps_- :V_+\rw V_+$ is a contraction map.

\subsection{Existence of an attracting limit cycle}

\begin{prop}  (a) Given $c\in (0,1)$, there exists $\hat{\eps}$ such that for all $\eps\leq\hat{\eps} $ and for all ${\bf v}_1, {\bf v}_2\in V_+,$
\begin{equation}\notag
\|r^\eps_-({\bf v}_2)-r^\eps_-({\bf v}_1) \|\leq c\|{\bf v}_2-{\bf v}_1\|.
\end{equation}
(b) Given $c\in (0,1)$, there exists $\hat{\eps}$ such that for all $\eps\leq\hat{\eps} $ and for all ${\bf v}_1, {\bf v}_2\in V_-,$
\begin{equation}\notag
\|r^\eps_+({\bf v}_2)-r^\eps_+({\bf v}_1) \|\leq c\|{\bf v}_2-{\bf v}_1\|.
\end{equation}
\label{prop1}
\end{prop}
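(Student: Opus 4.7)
The plan is to prove part (a) by uniformly bounding $\|Dr^\eps_-(\mathbf{v})\|$ on $V_+$ as $\eps\to 0$ and then invoking the mean value theorem; part (b) will follow by the same argument with $\mathbf{X}_+$, $P^s_+$, $R_+$, $b_+$ replacing $\mathbf{X}_-$, $P^s_-$, $R_-$, $b_-$. Implicit differentiation of the exit condition $h(\phi_-(\mathbf{v},t))=0$ at $t=t(\mathbf{v},\eps)$ will yield
\begin{equation*}
Dr^\eps_-(\mathbf{v}) = \Pi_-(\mathbf{v}^*)\,D\phi_-\bigl(\mathbf{v},t(\mathbf{v},\eps)\bigr), \qquad \mathbf{v}^*=r^\eps_-(\mathbf{v}),
\end{equation*}
where $\Pi_-=I-\mathbf{X}_- Dh/(Dh\cdot\mathbf{X}_-)$ is the projection onto $T\Sigma$ along $\mathbf{X}_-$. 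Two structural facts drive the estimate. First, because \eqref{NflipflopA}--\eqref{NflipflopC} do not involve $\xi_N$ and \eqref{NflipflopD} is linear in $(\eta_N,\xi_N)$, the Jacobian $J=D\mathbf{X}_-(P^s_-)$ is block triangular: the $3\times 3$ top-left block is the Jacobian of $\mathbf{Y}_-$ at $Q^s_-$, whose three eigenvalues are bounded away from zero uniformly in $\eps$ (since $Q^s_-$ is a stable node), while the remaining eigenvalue equals $-\eps b_-$ with eigenvector \emph{exactly} $e_4=(0,0,0,1)$. Second, a direct computation gives $\mathbf{X}_-(R_-)=\eps c_-\,e_4$ with $c_-=a(1-(\eta_N)^s_-)(b_--b)/b\ne 0$, because the first three components of $\mathbf{X}_-(R_-)$ vanish (they amount to evaluating $\mathbf{Y}_-$ at its equilibrium $Q^s_-$). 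A short calculation then yields $\Pi_-(R_-)\,e_4=0$ exactly, so the slow eigendirection lies in $\ker\Pi_-(R_-)$.

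With these facts in hand, I would choose $T_0$ independent of $\eps$ so that $\phi_-(\mathbf{v},T_0)$ lies in a neighborhood of $P^s_-$ in which the three-dimensional subsystem may be linearized for every $\mathbf{v}\in V_+$; this is possible since $V_+\subset W(P^s_-)$ and $Q^s_-$ is a hyperbolic sink. For $t\in[T_0,t(\mathbf{v},\eps)]$ the flow satisfies $\phi_-(\mathbf{v},t)-P^s_-\approx e^{J(t-T_0)}(\phi_-(\mathbf{v},T_0)-P^s_-)$, with the $\xi_N$-component governed \emph{exactly} by \eqref{NflipflopD} and the remaining three controlled via a Gronwall estimate that exploits the exponential decay of the fast modes. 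Solving the exit condition inside this linearization gives $t(\mathbf{v},\eps)-T_0=O(1/\eps)$; hence the fast modes contract by a factor $O(e^{-\kappa/\eps})$ for some $\kappa>0$ independent of $\eps$, while the slow mode contracts only by $e^{-\eps b_-(t-T_0)}=O(1)$. Consequently $\|\mathbf{v}^*-R_-\|=O(e^{-\kappa/\eps})$, and by smoothness of $\Pi_-$ together with $\Pi_-(R_-)e_4=0$, we obtain $\|\Pi_-(\mathbf{v}^*)\,e_4\|=O(e^{-\kappa/\eps})$.

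Assembling the pieces: in the eigenbasis of $J$ the fast coordinates of $D\phi_-(\mathbf{v},t(\mathbf{v},\eps))$ have norm $O(e^{-\kappa/\eps})$, and although the slow coordinate has norm $O(1)$ it is almost annihilated by $\Pi_-(\mathbf{v}^*)$; hence $\|Dr^\eps_-(\mathbf{v})\|=O(e^{-\kappa/\eps})$ uniformly on the compact set $V_+$. For any prescribed $c\in(0,1)$ I then pick $\hat\eps$ small enough that this bound is below $c$ for $\eps\le\hat\eps$, and (a) follows from the mean value theorem; (b) is identical, using $\mathbf{X}_+(R_+)=\eps c_+ e_4$ with $c_+=a(1-(\eta_N)^s_+)(b_+-b)/b\ne 0$, $V_-\subset W(P^s_+)$, and the stable node $Q^s_+$. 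The principal technical hurdle is justifying the linearization uniformly over the long time interval $[T_0,t(\mathbf{v},\eps)]$ of length $O(1/\eps)$—specifically, ensuring that the nonlinear corrections from the three-dimensional subsystem remain negligible throughout—which hinges on the exponential contraction of the fast modes past time $T_0$ keeping the trajectory inside the linearization regime.
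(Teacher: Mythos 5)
Your proposal is essentially correct in outline, but it takes a genuinely different and far more computational route than the paper. The paper never differentiates the return map. It exploits the decoupling of the $(w,\eta_S,\eta_N)$-subsystem from $\xi_N$ in a softer way: $V_+$ projects to a compact, convex subset $J$ of $W(Q^s_-)$, and since $Q^s_-$ is a hyperbolic sink for the three-dimensional flow $\psi_-$ (which involves neither $\xi_N$ nor $\epsilon$), there is a time $T_1$, independent of $\epsilon$, after which $\psi_-(\cdot,t)$ contracts distances on $J$ by the factor $c$; a finite-subcover argument then produces $\hat\epsilon$ so that every transit time $t({\bf v},\epsilon)$ exceeds $T_1$. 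The fourth coordinate is handled purely algebraically: both the domain $V_+$ and the range lie in $\Sigma$, where $\xi_N=\gamma(\eta_N)$ with $\gamma$ affine of slope $1+a/b$, so the contraction of the $\eta_N$-coordinate transfers to the $\xi_N$-coordinate with exactly matching Lipschitz constants, and $\|r^\epsilon_-({\bf v}_2)-r^\epsilon_-({\bf v}_1)\|\le c\|{\bf v}_2-{\bf v}_1\|$ follows with no spectral analysis at all. Your route---$Dr^\epsilon_-=\Pi_-\,D\phi_-$, the block-triangular Jacobian, the identity $\Pi_-(R_-)e_4=0$, and the slow/fast splitting---buys more (a quantitative, exponentially small bound on the derivative of the return map) at the cost of substantially more bookkeeping.

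Two soft spots in your sketch deserve attention. First, ``by smoothness of $\Pi_-$'' is not quite right as stated: since ${\bf X}_-(R_-)=\epsilon c_-e_4$, the denominator $Dh\cdot{\bf X}_-$ is only $\Theta(\epsilon)$ near $R_-$, so $\Pi_-$ varies with Lipschitz constant $O(1/\epsilon)$ there and you only obtain $\|\Pi_-({\bf v}^*)e_4\|=O(e^{-\kappa/\epsilon}/\epsilon)$; the exponential still wins, but the $\epsilon$-dependence of the projection must be tracked explicitly. Second, the ``principal technical hurdle'' you name---justifying the linearization of the four-dimensional system over a time interval of length $O(1/\epsilon)$---is largely self-inflicted and can be avoided: equations \eqref{NflipflopA}--\eqref{NflipflopC} contain neither $\xi_N$ nor $\epsilon$, so the first three components of $\phi_-$ and of $D\phi_-$ are exactly those of the autonomous three-dimensional flow (no perturbative linearization is needed, only the standard uniform decay of $D\psi_-$ on compact subsets of the basin of a hyperbolic sink), while \eqref{NflipflopD} is linear in $(\eta_N,\xi_N)$, so the $\xi_N$-component of $D\phi_-$ is given exactly by variation of constants. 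Exploiting the decoupling in this way, rather than through the spectrum of $D{\bf X}_-(P^s_-)$, removes the only genuinely delicate step in your argument.
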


\begin{proof}  
We prove case (a). In this proof, for ease of notation, we set $x=\eta_S, y=\eta_N$ and $z=\xi_N$. Relying on the fact equation \eqref{NflipflopD} decouples from equations \eqref{NflipflopA}--\eqref{NflipflopC}, the proof is in spirit  analogous to the proof of Proposition 5.4 in \cite{wwhm}; we include it here for completeness.

Let $c\in (0,1)$, and let ${\bf v}=(w_0,x_0,y_0,\gamma(y_0)) \in V_+$. Recall that by design, under the advancing flow corresponding to equations \eqref{NflipflopA}--\eqref{NflipflopC} we have $\psi_-((w_0,x_0,y_0),t)\rw Q^s_-$ as $t\rw\infty$. Since $V_+$ is the intersection of a closed ball in $\R^4$ with the hyperplane $\Sigma$, $V_+$ is compact (as well as connected and convex). Thus the set $J=\{ (w,x,y) : (w,x,y,\gamma(y))\in V_+\}$ is a compact set which, coupled with the fact $J\subset W(Q^s_-)$, yields the existence of $T_1$ such that for all $t\geq T_1$ and for all ${\bf u}_1,  {\bf u}_2\in J, \ \|\psi_-({\bf u}_2)-\psi_-({\bf u}_1)\|\leq c\|{\bf u}_2-{\bf u}_1\|$.  

Given ${\bf v}\in V_+$, pick $\eps({\bf v})$ such that $t({\bf v}, \eps({\bf v}))>T_1$, where $t({\bf v}, \eps({\bf v}))$ is as in the definition of $r^\eps_- $ \eqref{returnminus}. By the continuity  of $\phi_-$ with respect to initial conditions and time, there exists $\delta({\bf v})>0$ so that ${\bf w}\in B_{\delta({\bf v})}({\bf v})\cap V_+$ implies $t({\bf w}, \eps({\bf v}))>T_1$ (where $r^{\eps({\bf v})}_-({\bf w})\in \Sigma_-$). We note for any $\eps\leq \eps({\bf v}), \ t({\bf w}, \eps)>T_1$. 

In this fashion we arrive at an open covering 
\begin{equation}\notag
V_+\subset \bigcup_{{\bf v}\in V_+} B_{\delta({\bf v})}({\bf v})
\end{equation}
of the compact set $V_+$. Choose a finite subcover $\{ B_{\delta({\bf v}_n)}({\bf v}_n) : n=1, ... , N\}$, and let $\hat{\eps}=\min\{ \eps({\bf v}_n) : n=1, ... , N\}$. Then for any $\eps\leq\hat{\eps}$ and for all ${\bf v}\in V_+, \ t({\bf v},\eps)>T_1$.

Suppose $\eps\leq\hat{\eps}$, and let ${\bf v}_1=(w_1,x_1,y_1, \gamma(y_1))$ and $  {\bf v}_2=(w_2,x_2,y_2, \gamma(y_2))$ be elements in $V_+$. Set ${\bf u}_1=(w_1,x_1,y_1)$ and $
{\bf u}_2=(w_2,x_2,y_2).$

Let $r^\eps_-({\bf v}_1)=(w^\pr_1, x^\pr_1, y^\pr_1, \gamma(y^\pr_1)), \ r^\eps_-({\bf v}_2)=(w^\pr_2, x^\pr_2, y^\pr_2, \gamma(y^\pr_2)),  \ 
{\bf u}^\pr_1=(w^\pr_1, x^\pr_1, y^\pr_1)   $ and ${\bf u}^\pr_2=(w^\pr_2, x^\pr_2, y^\pr_2)$. Note that by our choice of $\eps, \ \|{\bf u}^\pr_2-{\bf u}^\pr_1\|\leq c \|{\bf u}_2-{\bf u}_1\|.$
\ We then have
\begin{align}\notag
\|r^\eps_-({\bf v_2})-r^\eps_-({\bf v_1})\|^2 & = \|{\bf u}^\pr_2-{\bf u}^\pr_1\|^2+(\gamma(y^\pr_2)-\gamma(y^\pr_1))^2\\\notag
& = \|{\bf u}^\pr_2-{\bf u}^\pr_1\|^2+ (1+\textstyle{\frac{a}{b}})^2 (y^\pr_2- y^\pr_1)^2\\\notag
 & \leq c^2\|{\bf u}_2-{\bf u}_1\|^2+(1+\textstyle{\frac{a}{b}})^2 c^2 (y_2-y_1)^2\\\notag
&= c^2\|{\bf u}_2-{\bf u}_1\|^2+c^2(\gamma(y_2)-\gamma(y_1))^2\\\notag
&=c^2\|{\bf v}_2-{\bf v}_1\|^2.
\end{align}
A similar argument can be given to prove that for $c\in (0,1), $ there exists $\hat{\eps}>0$ so that for all $\eps\leq \hat{\eps}, $ \ $r^\eps_+:V_-\rw \Sigma_+$ contracts distances by a factor of at most $c$.
\end{proof}

\begin{prop}  (a) There exists $\hat{\eps}>0$ such that for all $\eps\leq\hat{\eps} $, \ $r^\eps_- : V_+\rw V_-$.
\\ (b) There exists $\hat{\eps}>0$ such that for all $\eps\leq\hat{\eps} $, \ $r^\eps_+ : V_-\rw V_+$. 
\label{prop2}
\end{prop}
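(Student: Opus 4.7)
\emph{Proof proposal.} Only (a) requires an argument; (b) follows by an identical fast--slow calculation with the roles of $+$ and $-$ interchanged. The plan is to show that $r^\eps_-({\bf v})\rw R_-$ as $\eps\rw 0^+$, uniformly in ${\bf v}\in V_+$; since $V_-=\ov{B_{\delta_2}}(R_-)\cap\Sigma$, this forces $r^\eps_-(V_+)\subset V_-$ once $\hat\eps$ is small. The analysis exploits the decoupling of \eqref{NflipflopD} from \eqref{NflipflopA}--\eqref{NflipflopC}: the first three equations evolve on an $O(1)$ time scale independent of $\eps$, while $\xi_N$ drifts on an $O(1/\eps)$ scale.

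Write $\phi_-({\bf v},t)=(\psi_-({\bf u}_0,t),\xi_N(t))$ with ${\bf u}_0=(w_0,x_0,y_0)$ and $\xi_N(0)=\gamma(y_0)$. The projection $J=\{{\bf u}_0:({\bf u}_0,\gamma(y_0))\in V_+\}$ is compact, and the inclusion $V_+\subset W(P^s_-)$ from \eqref{Vplus}, together with the decoupling, yields $J\subset W(Q^s_-)$. By asymptotic stability of the hyperbolic sink $Q^s_-$, for every $\delta>0$ there is $T_0=T_0(\delta)$ such that $\psi_-(J,t)$ lies in a positively invariant $\delta$-neighborhood of $Q^s_-$ for all $t\geq T_0$. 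Since $|\dot\xi_N|\leq K\eps$ on $\mathcal B$ for some constant $K$, we have $|\xi_N(t)-\gamma(y_0)|\leq K\eps t$ throughout.

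Once $\eta_N$ is pinned within $\delta$ of $(\eta_N)^s_-$, equation \eqref{NflipflopD} reduces, up to $O(\delta)$, to $\dot\xi_N=\eps b_-((\xi_N)^s_- -\xi_N)$ with attractor $(\xi_N)^s_-=(1+a/b_-)(\eta_N)^s_- - a/b_-$. Three ordering facts pin down the geometry. First, since $\Tcnm>\Tcnp$ and Section~\ref{Section-Symmetry} shows that higher critical temperatures yield more equatorward stable ice lines, $(\eta_N)^s_-<(\eta_N)^s_+$, hence $\gamma((\eta_N)^s_-)<\gamma((\eta_N)^s_+)$. Second, a direct computation gives $(\xi_N)^s_- - \gamma((\eta_N)^s_-)=(a/b_- - a/b)((\eta_N)^s_- - 1)<0$, since $b_-<b$ and $(\eta_N)^s_-<1$. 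Third, for $\delta_1$ small every $y_0$ arising from $V_+$ satisfies $\gamma(y_0)>\gamma((\eta_N)^s_-)$. Combining these, $(\xi_N)^s_-<\gamma((\eta_N)^s_-)<\gamma(y_0)\approx\xi_N(T_0)$, so $\xi_N$ drifts strictly downward through the switching level $\gamma((\eta_N)^s_-)$ in finite time. At the first crossing time $T^*>T_0$, the trajectory meets $\Sigma$ with all four coordinates within a constant multiple of $\delta$ of those of $R_-$. Choosing $\delta\ll\delta_2$ and $\hat\eps$ so small that $K\hat\eps T_0(\delta)\ll\delta_2$ then forces $r^\eps_-(V_+)\subset V_-$.

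The main obstacle is the standard bookkeeping for singular perturbations: ensuring that the fast transient terminates before the slow $\xi_N$-drift becomes significant (controlled by $K\eps T_0$), and that during the subsequent $O(1/\eps)$ slow passage the fast coordinates do not escape the $\delta$-neighborhood of $Q^s_-$ (automatic from positive invariance). Beyond the uniform convergence argument already used for Proposition~\ref{prop1}, the genuinely new ingredient is the sign/ordering check that the slow drift actually carries $\xi_N$ across $\gamma((\eta_N)^s_-)$ rather than stalling above it, and this is exactly what the three inequalities above supply.
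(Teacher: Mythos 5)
Your argument is correct, and its core idea is the same one the paper uses: because the crossing point lies on $\Sigma$, it coincides with the ``shadow'' point $(w(t),\eta_S(t),\eta_N(t),\gamma(\eta_N(t)))$ at the crossing time, and since the $(w,\eta_S,\eta_N)$-trajectory converges to $Q^s_-$ and $t({\bf v},\eps)\rw\infty$ as $\eps\rw 0$, that shadow point converges to $R_-$. Where you diverge is in how uniformity over $V_+$ is obtained and in what is taken as given. The paper proves the estimate for a \emph{single} base point ${\bf v}\in V_+$ (getting within $\tfrac{1}{2}\delta_2$ of $R_-$) and then upgrades to all of $V_+$ by invoking the contraction property of $r^\eps_-$ from Proposition \ref{prop1} via the triangle inequality; you instead get uniformity directly from compactness of the projection $J$ and uniform attraction to the hyperbolic sink $Q^s_-$, which makes part (a) independent of Proposition \ref{prop1}. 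Your three ordering inequalities (in particular $(\xi_N)^s_-<\gamma((\eta_N)^s_-)$, which is exactly the statement $h(P^s_-)>0$, i.e.\ that $P^s_-$ is a virtual equilibrium in $S_+$) re-derive the fact that the $\phi_-$-trajectory must actually reach $\Sigma_-$; the paper treats this as already settled when it defines $r^\eps_-$ in \eqref{returnminus}, so this portion of your work is redundant but harmless --- indeed it makes explicit the remark $t({\bf v},\eps)\rw\infty$ that both arguments quietly rely on to guarantee the first crossing occurs after the fast transient has settled. Net effect: your route is a bit longer and more singular-perturbation-flavored but self-contained; the paper's is shorter by reusing the contraction estimate it needs anyway for Theorem \ref{cycle}.
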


\begin{proof}  We prove case (a). Let ${\bf v}=(w,\eta_S,\eta_N,\gamma(\eta_N))_{t=0}\in V_+$. We again note that $$\psi_-((w(0),\eta_S(0),\eta_N(0)),t)=(w(t),\eta_S(t),\eta_N(t))\rw Q^s_-\text{ as }t\rw\infty.$$ Additionally using the fact $\gamma (\eta_N)$ is continuous, pick $T=T({\bf v})$ such that for all $t\geq T,$
\begin{equation}\notag
\|(w(t),\eta_S(t),\eta_N(t),\gamma(\eta_N(t)))-R_-\|<\tx{\frac{1}{2}}\delta_2,
\end{equation} 
where $\delta_2$ is as in \eqref{Vminus}. 
Choose $\eps({\bf v})>0$ such that $t({\bf v}, \eps({\bf v}))>T$   (where $t({\bf v}, \eps({\bf v}))$ is as in \eqref{returnminus}). Then for all $\eps\leq \eps({\bf v}), \ \|r^\eps_-({\bf v})-R_-\|< \tx{\frac{1}{2}}\delta_2$.

Let $0<c<\text{min}\{1,\tx{\frac{1}{2}}\delta_2\}$, and pick $\hat{\eps}$ as in Proposition \ref{prop1}. For $\eps\leq \min\{\hat{\eps},\eps({\bf v})\}$ and for any ${\bf w}\in V_+$,
\begin{equation}\notag
\|r^\eps_-({\bf w})-R_-\|\leq \|r^\eps_-({\bf w}) - r^\eps_-({\bf v})\|+\|r^\eps_-({\bf v})-R_-\|<\tx{\frac{1}{2}}\delta_2+\tx{\frac{1}{2}}\delta_2=\delta_2.
\end{equation} 
Thus $r^\eps_-({\bf w})\in V_-$, and we conclude $r^\eps_-(V_+)\subset V_-$. 
\end{proof}

\vspace{0.01in}
\begin{thm}  With other parameters as in Table 1, choose $\Tcnm$- and $b_\pm$-values, respectively, so that for any $\eps>0$ \\
\hspace*{0.2in}(i) $P^s_-\in W(P^s_+)$, \ $P^s_+\in W(P^s_-)$, and\\
\hspace*{0.2in}(ii) $R_+, R_-\in W(P^s_-)\cap W(P^s_+)$,\\
 as discussed above. Then system \eqref{inclusion} admits a unique attracting limit cycle for sufficiently small $\eps>0$.
\label{cycle}
\end{thm}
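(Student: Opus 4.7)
The plan is to assemble the contraction estimates from Propositions \ref{prop1} and \ref{prop2} into a single contraction on $V_+$ via the composition $r^\eps = r^\eps_+\circ r^\eps_-$, invoke the Banach fixed-point theorem to extract a unique fixed point, and then show that the concatenated trajectory through that fixed point is precisely the desired attracting periodic orbit of the Filippov system \eqref{inclusion}.

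More concretely, I would first fix any constant $c\in(0,1)$ (say $c=1/2$) and, using Proposition \ref{prop1}(a)--(b) and Proposition \ref{prop2}(a)--(b), choose $\hat\eps>0$ small enough that simultaneously $r^\eps_-(V_+)\subset V_-$, $r^\eps_+(V_-)\subset V_+$, and both $r^\eps_\pm$ contract distances by the factor $c$ for every $\eps\leq\hat\eps$. Under hypotheses (i)--(ii) of the theorem, the discussion preceding Proposition \ref{prop1} has already guaranteed that the relevant inclusions $V_\pm\subset W(P^s_\mp)\cap \Sigma_\pm$ hold, so no further parameter adjustment beyond (i)--(ii) and the smallness of $\eps$ is needed. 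Then $r^\eps:V_+\to V_+$ is well-defined and satisfies
\begin{equation}\notag
\|r^\eps({\bf v}_2)-r^\eps({\bf v}_1)\|\leq c\,\|r^\eps_-({\bf v}_2)-r^\eps_-({\bf v}_1)\|\leq c^2\|{\bf v}_2-{\bf v}_1\|.
\end{equation}
Since $V_+$ is a closed (hence complete) subset of $\R^4$, the Banach contraction mapping theorem produces a unique fixed point ${\bf v}^*\in V_+$ of $r^\eps$.

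From ${\bf v}^*$ I would construct the periodic orbit by concatenating the two smooth pieces of the Filippov flow: the piece of $\phi_-$ joining ${\bf v}^*\in\Sigma_+\subset\Sigma$ to $r^\eps_-({\bf v}^*)\in\Sigma_-$, followed by the piece of $\phi_+$ joining $r^\eps_-({\bf v}^*)$ back to $r^\eps_+(r^\eps_-({\bf v}^*))=r^\eps({\bf v}^*)={\bf v}^*$. Because crossings at $\Sigma_\pm$ are transverse (points of $\Sigma_\pm$ lie in the crossing regions identified after equation \eqref{sigmaminus}), this concatenation is an absolutely continuous Filippov solution, periodic but only piecewise smooth, with two corners on $\Sigma$. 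Attraction follows by iterating: for any ${\bf v}\in V_+$, the sequence $(r^\eps)^n({\bf v})\to{\bf v}^*$ at rate $c^{2n}$, so any Filippov trajectory that enters $V_+$ (respectively $V_-$) is captured by this cycle; since by construction every trajectory starting in $S_+\cap W(P^s_+)$ arrives at $V_+$ in finite time (and similarly on the other side), the limit cycle attracts a full neighborhood of itself and indeed the entire domain in which the return maps are defined.

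The main obstacle, in my view, is bookkeeping rather than mathematical: making sure the two $\hat\eps$'s produced by Propositions \ref{prop1} and \ref{prop2} can be chosen \emph{simultaneously}, and verifying that the two continuity/compactness arguments underlying them still work when $c$ is further constrained to lie in $(0,\tfrac{1}{2}\min\{\delta_1,\delta_2\})$ in order to keep $r^\eps$ into $V_+$. A secondary subtlety is ensuring that uniqueness of the fixed point of $r^\eps$ in $V_+$ upgrades to uniqueness of the limit cycle in the relevant region of $\mathcal{B}$: because every trajectory of \eqref{inclusion} in the basin under consideration must intersect $\Sigma_+$ exactly once per cycle and the induced return map has only one fixed point, no other periodic orbit can exist in that region. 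With these points addressed, the contraction picture, reminiscent of the slow-manifold reduction familiar from geometric singular perturbation theory, delivers the theorem.
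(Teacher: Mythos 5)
Your proposal follows essentially the same route as the paper: fix $c\in(0,1)$, take $\hat\eps$ small enough that Propositions~\ref{prop1} and~\ref{prop2} simultaneously apply, observe that $r^\eps=r^\eps_+\circ r^\eps_-:V_+\to V_+$ is then a contraction with factor $c^2$, invoke Banach's fixed-point theorem to obtain the unique fixed point ${\bf v}^*$, and realize the limit cycle by concatenating the $\phi_-$-arc from ${\bf v}^*$ to $r^\eps_-({\bf v}^*)$ with the $\phi_+$-arc back to ${\bf v}^*$. The ``bookkeeping'' concerns you raise (choosing the several $\hat\eps$'s simultaneously, and the tighter restriction on $c$ required inside Proposition~\ref{prop2}) are already handled inside the two propositions rather than in the theorem's proof, so no additional argument is needed.
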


\begin{proof} Given $c\in (0,1)$, choose $\hat{\eps}_1$ and $\hat{\eps}_2$ as in cases (a) and (b), respectively, in Proposition \ref{prop1}. Also choose $\hat{\eps}_3$ and $\hat{\eps}_4$ as in cases (a) and (b), respectively, in Proposition \ref{prop2}. Let $\eps\leq \min\{\hat{\eps}_n : n=1, ... , 4\}$. Then since   $r^\eps_- :V_+\rw V_-$ and $r^\eps_+:V_-\rw V_+$, we can define the return map
\begin{equation}\notag
r^\eps=r^\eps_+\circ r^\eps_- :V_+\rw V_+.
\end{equation}
As $r^\eps$ is additionally a contraction map with contraction factor $c^2$, $r^\eps$ has a unique  fixed point ${\bf v}^*\in V_+ $ to which all $r^\eps$-orbits converge. The Filippov trajectory that flows via $\phi_-$ from ${\bf v}^*$ to ${\bf w}=r^\eps_-({\bf v}^*)$, and from ${\bf w}$ back to ${\bf v}^*$ via $\phi_+$, is then an attracting (nonsmooth) limit cycle.
\end{proof}

\vspace{0.01in}\noindent
{\bf Remark.} Every trajectory of system \eqref{inclusion} that passes through $V_+$ (or through $V_-$) converges to the limit cycle $\Gamma$ provided by Theorem \ref{cycle}. We note that given any compact set $K\subset W(P^s_+) \cap S_+$, one can choose $\eps$ sufficiently small  to ensure  $K$ is contained in the stable set of $\Gamma$ under the Filippov flow.

\vspace{0.1in}

\section{Selected Numerical Results}
In Figure \ref{FIG-neg5-neg10} we plot the projection of the limit cycle $\Gamma$ into the three-dimensional $(\eta_S,\eta_N,\xi_N)$-space, along with the behavior of $-\eta_S, \eta_N$ and $\xi_N$ over time along $\Gamma$, in the case $\Tcnm=-5^\circ$C. We first note the sawtooth pattern evident in the evolution of each of the variables, with a rapid retreat into an interglacial period following a slower descent into a glacial age, as seen in the climate data over the past  1 million years \cite{raylisnic}.

\begin{figure}
\begin{center}
\begin{tikzpicture}
  \node (img1)  {\includegraphics[width=2.8in]{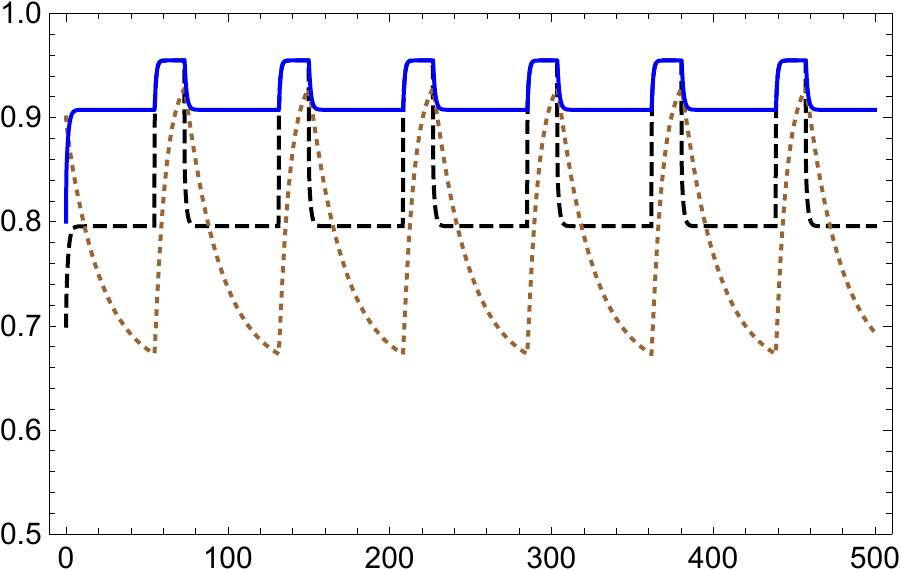} };
  \node[below=of img1, node distance=0cm, yshift=1cm] {$t$};
  \node[left=of img1, node distance=0cm, rotate=90, anchor=center,yshift=-0.7cm] {sine of latitude};
  \node[right=of img1, yshift=0.1cm] (img2)  {\includegraphics[width=1.8in]{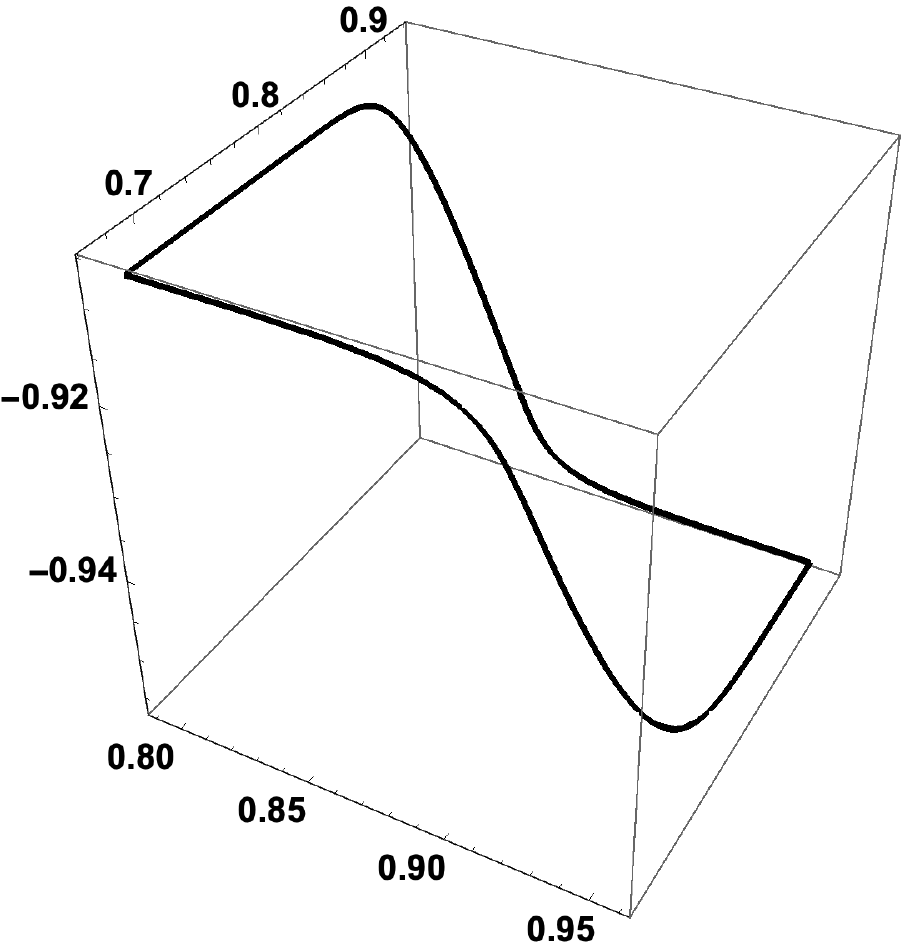}};
  \node[below=of img1, yshift=0.1cm] (img3)  {\includegraphics[width=2.8in]{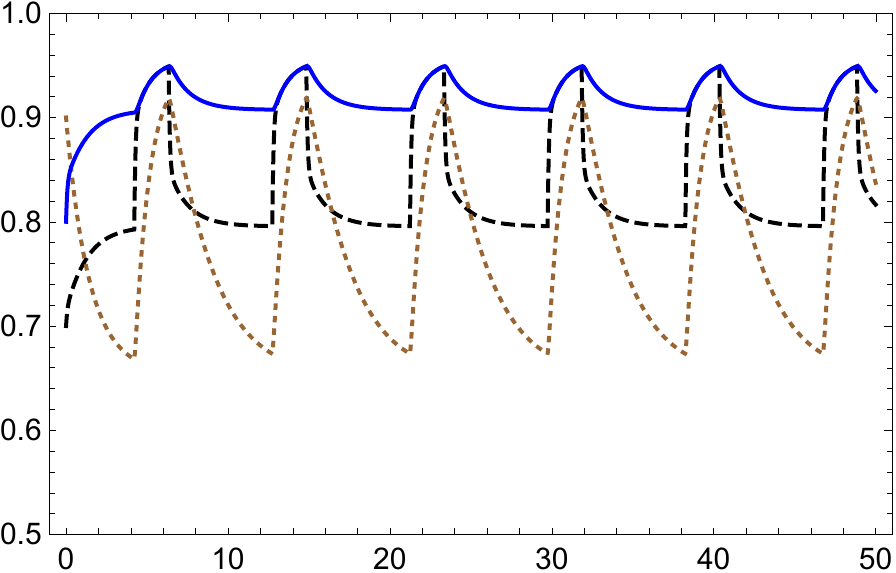}};
  \node[left=of img3, node distance=0cm, rotate=90, anchor=center,yshift=-0.7cm] {sine of latitude};
  \node[below=of img3, node distance=0cm, yshift=1cm] {$t$};
  \node[right=of img3, yshift=0.1cm] (img4)  {\includegraphics[width=1.8in]{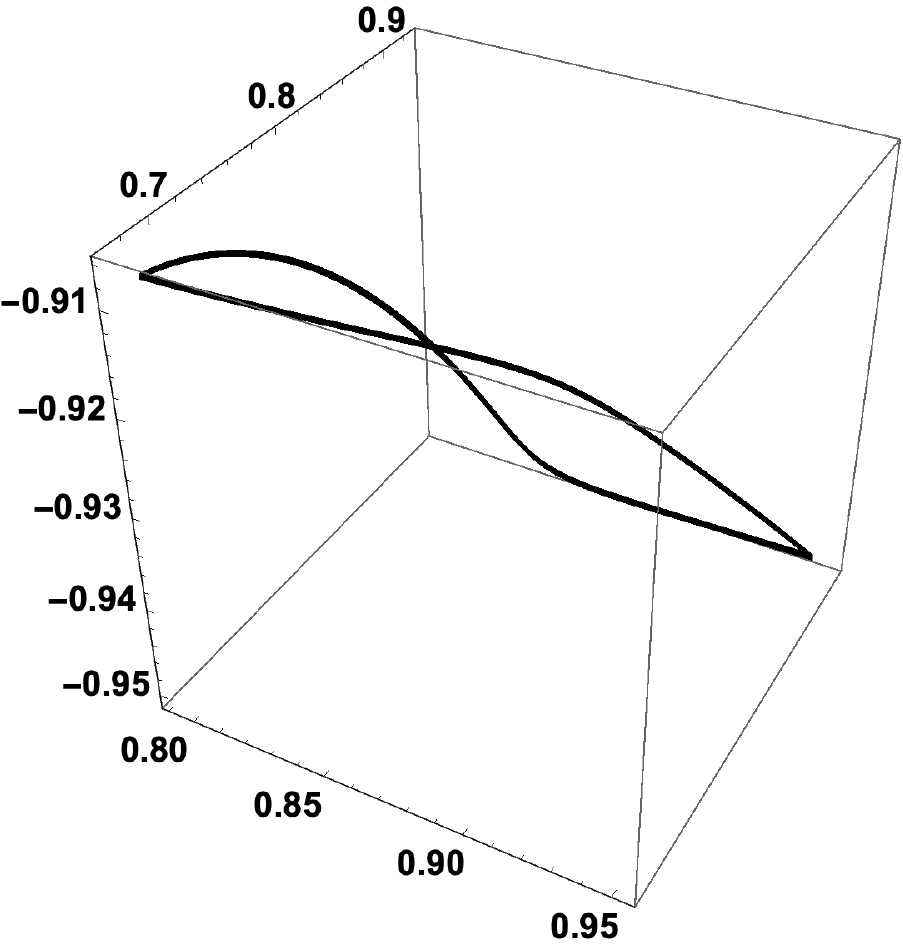}};
  \node[below=of img2, yshift=1.6cm, xshift=-.9cm] {$\eta_N$};
  \node[below=of img2, yshift=3.7cm, xshift=-2.4cm] {$\eta_S$};
  \node[below=of img2, yshift=5.9cm, xshift=-1.5cm] {$\xi_N$};
  \node[below=of img4, yshift=1.6cm, xshift=-.9cm] {$\eta_N$};
  \node[below=of img4, yshift=3.7cm, xshift=-2.4cm] {$\eta_S$};
  \node[below=of img4, yshift=5.9cm, xshift=-1.5cm] {$\xi_N$};
\end{tikzpicture}
\caption{{\small \textbf{Top:} {\em Left}:  The behavior of $\eta_N$ (dashed black curve), $\xi_N$ (dotted brown curve) and $-\eta_S$ (solid blue curve)  along the limit cycle $\Gamma$ when $\Tcnm=-5^\circ$C, $\Tcnp=\Tcs=-10^\circ$C, $\eps=0.03$ and $\rho=0.3$. \ { \em Right}: The projection of $\Gamma$ into $(\eta_S,\eta_N,\xi_N)$-space. \textbf{Bottom:} Same as top row except $\rho=\epsilon=0.3$.
}} 
\label{FIG-neg5-neg10}
 \end{center}
\end{figure}

Of particular interest is the oscillation of $\eta_S(t)$ in the Southern Hemisphere, which is completely driven by the ``flip-flop" in the  Northern Hemisphere.  While the $\dot{\eta_S}$-equation has no explicit dependence on $\eta_N$, the dynamic coupling of the hemispheres provided by the $\dot{w}$-equation governs the Southern Hemisphere response to the growth and retreat of the Northern Hemisphere ice sheets. As noted above, this model behavior aligns with theory of M. Milankovitch, which posits that changes in Northern Hemisphere high latitude insolation---due to variations in Earth's orbital elements over long time scales---comprise the principle forcing mechanism of the glacial-interglacial cycles  \cite{hays,milank, raylisnic,uemura}.

We further note the Southern Hemisphere albedo line oscillations (solid blue curve),   while smaller in amplitude, are nonetheless in sync with Northern Hemisphere oscillations (dashed black curve). This behavior is evident in the climate data on orbital time scales \cite{broecker,brook,lowell,raylisnic}. 
The model produces different amplitude oscillations when  choosing different $\Tcnm$-values (see Figure \ref{FIG-Flip-SH}). When the critical temperature during glacial advance is larger, more ice can form and the albedo line advances closer to the equator. Note the effect such a change in the Northern Hemisphere albedo line has on both the mass balance in the Northern Hemisphere and the Southern Hemisphere albedo line. When the critical temperature is more negative, the amplitude of both hemisphere albedo lines and the mass balance is decreased.

For the top row in Figure \ref{FIG-neg5-neg10} the $\eps$-value is an order of magnitude smaller than $\rho$. We numerically find the limit cycle $\Gamma$ exists for larger $\eps$ as well, as illustrated in the bottom row, in which $\eps=\rho$.  

\begin{figure}
\begin{center}
\begin{tikzpicture}
  \node (img1)  {\includegraphics[width=2.8in]{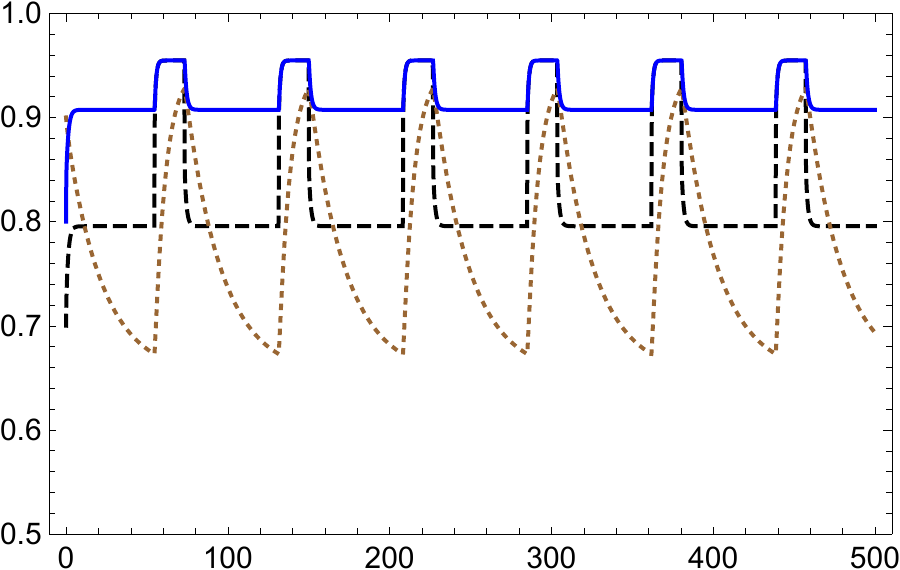}};
  \node[below=of img1, node distance=0cm, yshift=1cm] {$t$};
  \node[left=of img1, node distance=0cm, rotate=90, anchor=center,yshift=-0.7cm] {sine of latitude};
  \node[right=of img1,yshift=0cm] (img2)  {\includegraphics[width=2.8in]{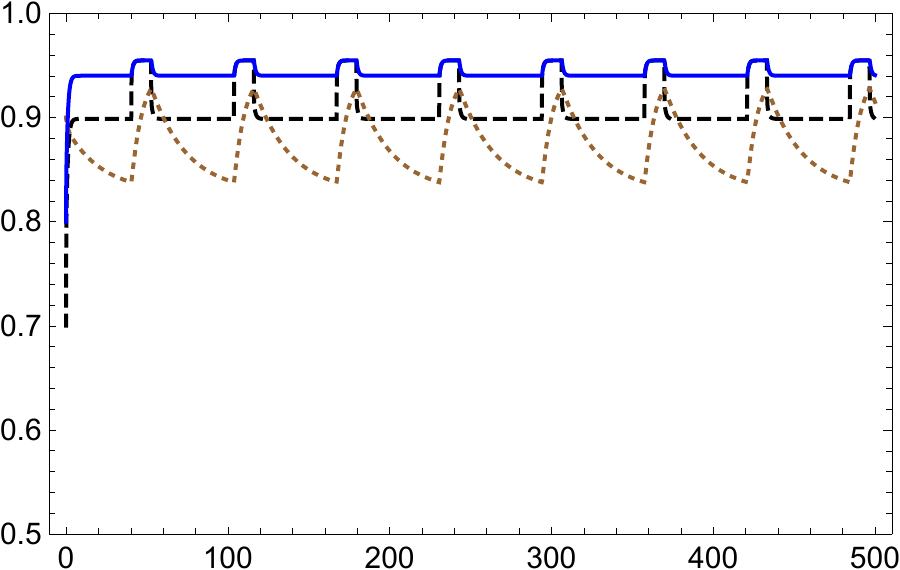}};
  \node[below=of img2, node distance=0cm, yshift=1cm] {$t$};
  \node[left=of img2, node distance=0cm, rotate=90, anchor=center,yshift=-0.7cm] {sine of latitude};
\end{tikzpicture}
\caption{{\small    Periodic behavior of the albedo and ice lines for different critical temperature while the Northern Hemisphere glacier advance.  For all figures $\Tcnp=T_{cS}=-10$ and $\eps=0.03$. Curve coloring and patterns same as in Figure \ref{FIG-neg5-neg10}. \textbf{Left:} $T^-_{cN}=-5^\circ$C and \textbf{Right:} $T^-_{cN}=-8^\circ$C
}} 
\label{FIG-Flip-SH}
 \end{center}
\end{figure}

\section{Discussion}
Many planetary energy balance climate models assume a symmetry about the equator; as in Budyko's seminal model for the Earth \cite{budyko}, one focuses solely on the climate in the   Northern Hemisphere.  In this work we couple an approximation of Budyko's latitudinally-averaged surface temperature equation with both Northern Hemisphere and Southern Hemisphere dynamic albedo lines $\eta_N$ and $\eta_S$. Each albedo line is associated with a critical temperature ($T_{cN}, T_{cS}$) that delineates between the local formation and melting of ice.
 
A planet's zonally averaged and mean annual distribution of insolation $s(y,\beta)$ depends on the obliquity $\beta$ as well as the latitude. Earth's obliquity is such that there can only  exist stable, symmetric albedo line positions ($\eta_N=-\eta_S$) if one assumes  $T_{cN}=T_{cS}$. (However, we note that in an energy balance model of Pluto, there exist stable {\em asymmetric} albedo line positions in the case $T_{cN}=T_{cS}$, due to Pluto's obliquity of $119.6^\circ$ \cite{Nadeau2019}.) A full analysis of the temperature-albedo lines $(w,\eta_S,\eta_n)$-system  for Earth in the case $T_{cN}=T_{cS}$, including snowball Earth ($\eta_S=\eta_N$) and ice-free Earth ($\eta_S=-1, \eta_N=1$) scenarios, will appear in a forthcoming paper.

Taking different critical temperature values leads to stable, asymmetric $\eta_S$- and  $\eta_N$-positions in our model. Of particular interest is the fact that a change in $T_{cN}$ alone leads to changes in each of the stable $\eta_S$- and $\eta_N$-albedo line placements, due to the coupling of the Northern Hemisphere and Southern Hemisphere provided by the temperature equation ($\dot{w}$).

The paleoclimate data indicates that on orbital time scales (100 kyr), oscillations in Northern Hemisphere and Southern Hemisphere ice caps are in sync, with evidence suggesting the Southern Hemisphere oscillations are a consequence of changes in the Northern Hemisphere ice sheets  \cite{broecker,brook,lowell,raylisnic}. We were thus lead to incorporate the advance-retreat ``flip-flop" from \cite{wwhm} into the Northern Hemisphere in our model. Using Filippov's theory for discontinuous vector fields, we proved the existence of a unique attracting limit cycle, corresponding to glacial oscillations in which variations in $\eta_S$ and $\eta_N$ are indeed in sync. The cycling of the Northern Hemisphere ice sheet is sufficient to generate changes in the Southern Hemisphere ice extent, again due to the hemispheric coupling inherent in the temperature equation.

The interaction between the hemispheres in our model naturally lends itself to the investigation of several related questions, both of mathematical and paleoclimatic interest. The obliquity, which varies with a period of roughly 41 kyr, can be incorporated into the insolation distribution function $s(y,\beta)$, leading to a nonautonomous and forced discontinuous system. Similarly, the solar ``constant" $Q$ varies with the  eccentricity of Earth's orbit \cite{dickclarence}, and it too can be used to force our Filippov system. As each of the obliquity and eccentricity signals are present in the paleoclimate data \cite{hays}, it would be of interest to analyze the effect of external forcing on the $(w,\eta_S,\eta_N,\xi_N)$-system. Any such study would begin with preliminary investigations into the effect external forcing has on the $(w,\eta_S,\eta_N)$-system \eqref{system-NS}.

Antarctica is believed to have been continuously ice-covered over the past 1 million years   \cite{raylisnic, tzip2003}. In \cite{raylisnic}, Raymo et al posit that Antarctica's ice sheet was more dynamic 3 million years ago (mya), with a terrestrial-based ice margin. It is further suggested in \cite{raylisnic} that the transition from a dynamic to a permanent Antarctic ice sheet played an important role in the Mid-Pleistocene Transition, a time roughly 1 mya in which the period of the glacial cycles changed from 41 kyr to 100 kyr. This conceptual scenario can be investigated  with our model, perhaps with the addition of a $\xi_S$-variable for the period of time when Antarctic ice terminated on land, and which over time coalesced with the albedo line $\eta_S$ as a parameter varies. More generally, the use of nonsmooth bifurcation theory as a tool to investigate changes in our system as various parameters vary is easy to envision.

Finally, it is of interest to note there is an asymmetry in Northern Hemisphere and Southern Hemisphere glacial cycle oscillations on a millenial time scale---the so-called {\em bipolar seesaw} \cite{broecker, brook, pedro}. This asymmetry is thought to be caused by disruptions in the meridional transport of heat by the ocean up to the North Atlantic, due in turn to the discharge of fresh meltwater from Northern Hemisphere ice sheets into the North Atlantic ocean \cite{timmermann}. In terms of the model, this ocean heat transport is associated with the $-C(T-\overline{T})$-term, which concerns the global average surface temperature. In our view a diffusive meridional heat transport approach \cite{north,sellers,Walsh2020} would be more appropriate for investigations into millenial time scale, asynchronous oscillations in Northern Hemisphere and Southern Hemisphere ice extent, perhaps incorporating the diffusion coefficient as a function of latitude and thereby bringing into play localized heat transport.

\section*{Acknowledgements}
Research of AN was supported by an NSF Mathematical Sciences Postdoctoral Research Fellowship, Award Number DMS-190288.

\bibliographystyle{siam}
\bibliography{budyko}

\end{document}